\documentclass[11pt,reqno]{amsart}
\usepackage[margin=3.6cm]{geometry}
\usepackage{graphicx}
\usepackage{amssymb}
\usepackage{bbm}
\usepackage{color}
\usepackage{appendix}
\usepackage{svg}

\newcommand{\R}{{\mathbb R}}
\newcommand{\N}{{\mathbb N}}

\newcommand{\C}{{\mathbb C}}
\newcommand{\Z}{{\mathbb Z}}

\newcommand{\ep}{\varepsilon}

 \renewcommand{\Im} {\mathrm{Im\,}}
 \renewcommand{\Re} {\mathrm{Re\,}}

\newtheorem{theorem}{Theorem}

\newtheorem{proposition}[theorem]{Proposition}
\newtheorem{lemma}[theorem]{Lemma}

\theoremstyle{definition}
\newtheorem{definition}{Definition}
\newtheorem{remark}{Remark}

\newtheorem*{Example}{Example}


\begin{document}
\title[$L^2$ restriction bounds for analytic continuations]{$L^2$ restriction bounds for analytic continuations of quantum ergodic Laplace eigenfunctions}
\author{John A. Toth and Xiao Xiao}

\begin{abstract}
We prove a quantum ergodic restriction (QER) theorem for real hypersurfaces $\Sigma \subset X,$ where $X$ is the Grauert tube associated with  a real-analytic, compact Riemannian manifold.  As an application, we obtain $h$ independent upper and lower bounds for the $L^2$ - restrictions  of the FBI transform of Laplace eigenfunctions restricted to $\Sigma$ satisfying certain generic geometric conditions. 
\end{abstract}

\maketitle
\nocite{*}

\section{Introduction}

Let $(M^n,g)$ be an $n$-dimensional $C^{\infty}$ compact Riemannian manifold and $\{ u_{\lambda_j} \}_{1}^{\infty}$ be a {\em quantum ergodic (QE)} sequence of $L^2$-normalized eigenfunctions where $u_{\lambda_j}$ is an eigenfunction with Laplace eigenvalue $\lambda_j^2.$ The celebrated QE Theorem asserts that for any zeroth order symbol $a \in S^{0}(T^*M),$ there is a density-one subsequence, ${\mathcal S},$ of QE eigenfunctions such that
\begin{equation} \label{QE} 
\lim_{\lambda_j \to \infty, \, j \in {\mathcal S}} \, \langle Op(a) u_{\lambda_j}, u_{\lambda_j} \rangle_{L^2} = \int_{S^*M} a d\mu_L,
\end{equation}
where $d\mu_L$ is Liouville measure on $S^*M.$ In the following, we opt for semiclassical notation in the following and set the semiclassical parameter $h_j  = \lambda_j^{-1}.$
 
 Suppose $H^{n-1} \subset M^n$ is a $C^{\infty}$ separating hypersurface with unit exterior normal $\nu.$ Then, given the normalized Cauchy data $(u_h^H,  u_{h}^{H,\nu}):= (u_h |_H, h \partial_{\nu} u_h |_H),$ there is analogous {\em quantum ergodic restriction (QER)} theorem \cite{CTZ13}: For any QE sequence of eigenfunctions $\{u_h \},$ and any $a \in S^0(T^*H),$
 
 \begin{equation} \label{QER}
 \langle Op_h(a) u_h^{H,\nu}, u_h^{H,\nu} \rangle_{L^2(H)} + \langle ( Id + h^2 \Delta_H) Op_h(a) u_h^{H}, u_h^{H} \rangle_{L^2(H)} \sim_{h \to 0} 2 \int_{S_H^*M} a d \mu_L. \end{equation}
 
 The formula in (\ref{QER}) has many applications; These include the asymptotics of eigenfunction nodal sets \cite{JZ16}.
 
In \cite{CT24},  the authors prove a 2-microlocal version of (\ref{QER}).  To describe their result, suppose $(M,g)$ is a compact real-analytic Riemannian manifold and $M^{\mathbb{C}}_{\tau}$ is the associated Grauert tube complexification of radius $\tau\in (0,+\infty]$ (see Sections \ref{tube} and \cite{Z07}). We denote the analytic continuation of $u_h$ to the tube $M_{\tau}^{\C}$ by $ u_h^{\C}$. The main result of \cite{CT24} says that, given any compact separating hypersurface $\Sigma \subset M_{\tau}^{\C}$,

\begin{equation}\label{qercd}
    \begin{split}
        \langle a(h^2\Delta_{\Sigma}+2h\nabla\rho+h\Delta\rho)e^{-\rho/h}u^{\mathbb{C}}_h,e^{-\rho/h}u^{\mathbb{C}}_h\rangle&_{L^2(\Sigma)}\\
        +\langle ah\partial_\nu (e^{-\rho/h}u^{\mathbb{C}}_h),h\partial_{\nu}(e^{-\rho/h}u^{\mathbb{C}}_h)\rangle&_{L^2(\Sigma)}\\
        \sim_{h\to 0^+}e^{1/h}&\int_{\Sigma\cap S^*M}a  \,q \,d\mu_{\Sigma}.
    \end{split}
\end{equation}

Here, $q \in C^{\infty}(\Sigma)$ with explicit formula given in the Appendix.

To describe our first main result in Theorem \ref{thm1} it is convenient to reformulate (\ref{qercd}) in terms of a specific FBI transform that is compatible with the complex structure on $M_{\tau}^{\C}$ (see Section \ref{tube} for more details).

Let $E(h):=e^{\frac{h}{2} \Delta_g}: C^{\infty}(M) \to C^{\infty}(M)$ denote the heat operator at time $h/2$ where we choose the semiclassical parameter $h^{-2} \in \text{Spec}(-\Delta_g).$ Then it is well known that \cite{GLS96} with the holomorphically continued operator $E^{\C}(h): C^{\infty}(M) \to {\mathcal O}(M_{\tau}^{\C}),$

$$
T_{hol}(h):= e^{\rho/h} E^{\C}(h)
$$
is a semiclassical FBI transform in the sense of Sj\"{o}strand \cite{S82}, where $\rho = \frac{1}{2} |\xi|_{x}^2$ is the K\"{a}hler potential on the tube $M_{\tau}^{\C}.$ In the following,  we will abuse notation somewhat and simply write $T = T_{hol}(h).$
Since the $u_h$ are Laplace eigenfunctions, it follows that, in particular,

\begin{equation}\label{Tu_h}
    Tu_h(z)=e^{-1/2h}e^{-\rho(z)/h}u_h^{\mathbb{C}}(z).
\end{equation}

In the following, we denote the restriction of $Tu_h$ to $\Sigma$  by $T_{\Sigma} u_h := T u_h |_{\Sigma}.$

The first main result of this paper uses (\ref{qercd}) to prove the following {\em 2-microlcal quantum ergodic restriction (2MQER)} result for $T_{\Sigma} u_h:$

\begin{theorem} \label{thm1}
Let $(M,g)$ be a compact $C^{\omega}$ Riemannian manifold with Grauert tube $M_{\tau}^{\C},$    $\Sigma \subset M_{\tau}^{\C}$  a compact, separating hypersurface and $\{ u_h \}$ any sequence of $L^2$-normalized  QE Laplace eigenfunctions on $M.$ Then,  for any $a \in C^{\infty}(\Sigma)$, there exists ${\mathcal P}_{\Sigma,a}(h) \in \Psi_{sc}^{0}(\Sigma)$ such that

$$
\langle {\mathcal P}_{\Sigma, a}(h) T_{\Sigma}u_h, T_{\Sigma}u_h \rangle_{L^2(\Sigma)}  \sim_{h \to 0^+} \int_{S^*M \cap \Sigma} a\,q\,d\mu_{\Sigma}.
$$
\end{theorem}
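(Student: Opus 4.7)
The plan is to convert the 2-microlocal Cauchy-data asymptotic (\ref{qercd}) into a single bilinear form in $T_\Sigma u_h$ via three steps.

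\emph{Step 1: Substitution.} By (\ref{Tu_h}) one has $e^{-\rho/h} u_h^{\C} = e^{1/(2h)} T u_h$, so each of the two quadratic forms on the left side of (\ref{qercd}) acquires an overall factor $e^{1/h}$. Cancelling this against the $e^{1/h}$ on the right-hand side rewrites (\ref{qercd}) as
\begin{equation*}
\langle A(h)\, T_\Sigma u_h, T_\Sigma u_h\rangle_{L^2(\Sigma)} + \langle a\, (h\partial_\nu Tu_h)|_\Sigma, (h\partial_\nu Tu_h)|_\Sigma\rangle_{L^2(\Sigma)} \sim \int_{\Sigma \cap S^*M} a\,q\,d\mu_\Sigma,
\end{equation*}
where $A(h) := a\,(h^2\Delta_\Sigma + 2h\nabla\rho + h\Delta\rho)$ is an explicit tangential $h$-differential operator on $\Sigma$; this first pairing is already in the required form.

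\emph{Step 2: Dirichlet-to-Neumann.} The crux is to produce $N(h) \in \Psi^1_{sc}(\Sigma)$ satisfying
\begin{equation*}
(h\partial_\nu Tu_h)|_\Sigma = N(h)\, T_\Sigma u_h + O_{L^2(\Sigma)}(h^\infty).
\end{equation*}
I would use the fact that, since $u_h^{\C}$ is holomorphic and $(h^2 \Delta_g + 1) u_h = 0$, the function $Tu_h$ is annihilated in a Grauert tube neighborhood of $\Sigma$ by the conjugated operator
\begin{equation*}
P(h) := e^{-\rho/h}\, (h^2 \Delta_g^{\C} + 1)\, e^{\rho/h} \in \Psi^2_{sc}(M_\tau^{\C}),
\end{equation*}
whose coefficients are real-analytic. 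Off $\Sigma \cap S^*M$ the principal symbol of $P(h)$ is elliptic in the direction transverse to $\Sigma$, so $P(h)$ admits a semiclassical factorization $P(h) = E(h)\bigl(h\partial_\nu - B_+(h)\bigr)\bigl(h\partial_\nu - B_-(h)\bigr)$ modulo $O(h^\infty)$, with $B_\pm(h)\in \Psi^1_{sc}(\Sigma)$ whose symbols $b_\pm$ are the two roots of the indicial polynomial in the transverse covariable. A standard Sj\"ostrand-type parametrix argument then identifies $N(h)$ as the branch selected by the holomorphic extension on the relevant side of $\Sigma$.

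\emph{Step 3: Assembly and order reduction.} Setting $\mathcal{P}_{\Sigma,a}(h) := A(h) + N(h)^* a N(h)$ delivers the stated asymptotic. This operator is a priori in $\Psi^2_{sc}(\Sigma)$; to land in $\Psi^0_{sc}(\Sigma)$ I would exploit the microlocal concentration of $T_\Sigma u_h$ on the compact set $\Sigma \cap S^*M$, a direct consequence of the exponential weight $e^{-\rho/h}$ together with standard FBI decay estimates for eigenfunctions, and multiply by a symbol cutoff supported in a bounded neighborhood of $\Sigma\cap S^*M$. This changes the matrix element only by $O(h^\infty)$ and produces the required $\Psi^0_{sc}(\Sigma)$ representative.

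\textbf{Main obstacle.} The delicate step is the Dirichlet-to-Neumann construction: one must verify that the factorization is a genuine parametrix (not merely formal), select the correct branch $b_\pm$ dictated by the holomorphic extension on each side of $\Sigma$, and control the remainder in $L^2(\Sigma)$ uniformly across the glancing set $\Sigma \cap S^*M$. The mixed real-analytic/K\"ahler structure of $P(h)$ on the Grauert tube is what makes this possible, and the symbol of $N(h)$ is what ultimately determines the Weyl symbol of $\mathcal{P}_{\Sigma,a}(h)$.
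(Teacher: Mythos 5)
Your overall strategy — substitute, convert the Neumann term into a tangential operator acting on $T_\Sigma u_h$, assemble, and cut off using wavefront localization — matches the paper's architecture, but your Steps 1 and 2 both have problems.

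\textbf{Step 1 is not complete as written.} You assert that $A(h) := a\,(h^2\Delta_\Sigma + 2h\nabla\rho + h\Delta\rho)$ is ``already in the required form'' as a tangential operator on $\Sigma$. It is not: $\nabla\rho$ is the ambient gradient in $B_\tau^*M$, and at a generic point of $\Sigma$ it has a nonzero normal component $(\nabla\rho)^\nu = \langle\nabla\rho,\nu\rangle\,\nu$. The directional derivative $h\nabla\rho$ therefore contains a genuine normal derivative, and the first pairing in your rewritten identity is not yet a bilinear form in the Dirichlet trace alone. The paper handles this by splitting $\nabla\rho = (\nabla\rho)^T + (\nabla\rho)^\nu$ and then converting the normal piece into a tangential one using the Cauchy--Riemann equation, i.e.\ replacing $\langle\nabla\rho,\nu\rangle\,\partial_\nu u_h^{\C}$ by $-i\langle\nabla\rho,\nu\rangle\,(J\nu)(u_h^{\C})$ where $J\nu$ is tangent to $\Sigma$; see equations (\ref{1st})--(\ref{Ye-eY}). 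Without this manipulation, $A(h)$ is not an operator on $\Sigma$.

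\textbf{Step 2 chooses a different route that runs into the difficulty you yourself flag.} You propose a semiclassical Dirichlet-to-Neumann operator $N(h)$ obtained by factoring $P(h) = E(h)(h\partial_\nu - B_+)(h\partial_\nu - B_-)$. The problem is that the wavefront set $WF_h(T_\Sigma u_h)$ is contained in ${\mathcal W}_\Sigma$, whose base projection lies inside $\Sigma \cap S^*M$ (Proposition~\ref{WF2} and equation~(\ref{singsupp})), which is precisely the characteristic/glancing region where the transverse ellipticity of $\sigma(P_\rho)$ fails or degenerates. The factorization and the branch selection you need are problematic exactly on the set that carries the mass of $T_\Sigma u_h$, so the proposed parametrix would only control the part of the quadratic form that is already $O(h^\infty)$ for trivial reasons. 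The paper avoids a DtN parametrix altogether: because $u_h^{\C}$ is holomorphic, the Cauchy--Riemann equation gives the \emph{exact} tangential identity
\begin{equation*}
-h\partial_\nu(Tu_h) = \bigl(ihX + (\partial_\nu\rho - iX\rho)\bigr) T u_h =: R\, Tu_h, \qquad X = J\nu \in T\Sigma,
\end{equation*}
valid everywhere on $\Sigma$, not just microlocally off glancing; see (\ref{CReqs})--(\ref{R}). One then writes the Neumann pairing as $\langle R^* a R\, T_\Sigma u_h, T_\Sigma u_h\rangle$, with $R^*aR$ an explicit second-order tangential $h$-differential operator (\ref{conjugation}). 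This is the key structural simplification of the proof, and it is what you are missing.

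Your Step 3 (cutting off in a neighborhood of the compact wavefront set to reduce from $\Psi^2_{sc}$ to $\Psi^0_{sc}$) is essentially identical to the paper's (\ref{test op}) and is fine, granted the two preceding steps.

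\end{document}
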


The formula for the operator ${\mathcal P}_{\Sigma, a}(h)$ is somewhat cumbersome to write but is given explicitly in (\ref{explicit}), in which the angles $\theta,\phi$ depend on the positioning of $\Sigma$ relative to the structures $\rho,J$ of the ambient space. The precise definitions are given in the paragraph before (\ref{theta,phi}). Finally, we note that in view of (\ref{qercd}) one can write

\begin{equation} \label{basic op}
{\mathcal P}_{a,\Sigma}(h) = a \cdot {\mathcal P}_{1,\Sigma}(h).
\end{equation}

\begin{definition} In the following, we say that $\Sigma$ is in {\em general position} if the condition
\begin{equation} \label{general position}
\int_{\Sigma \cap S^*M} q \, d\mu_{\Sigma} \neq 0
\end{equation}
is satisfied.
\end{definition}

We show in the appendix (see Lemma \ref{general}) that (\ref{general position}) is satisfied for a large class of hypersurfaces in $B^*M;$ in particular, for those that are sufficiently close (in terms of K\"{a}hler distance) to the ''vertical" hypersurfaces $B_H^*M$ where $H \subset M$ is a real hypersurface of $M$.

In Proposition \ref{WF2} we construct a compact set $\mathcal{W}_{\Sigma}$ that contains $WF_h(T_{\Sigma}u_h)$. Our subsequent results use Theorem \ref{thm1} together with the more detailed analysis of $WF_h(T_{\Sigma} u_h)\subset\mathcal{W}_{\Sigma}$ and the operators ${\mathcal P}_{\Sigma}(h):= {\mathcal P}_{\Sigma, 1}(h),$ to give asymptotic upper and lower bounds for the $L^2$-restrictions $\|T_{\Sigma} u_h \|_{L^2(\Sigma)}$. Specifically, we prove

\begin{theorem} \label{bounds}
Let $\Sigma \subset M_{\tau}^{\C}$ be a closed separating hypersurface in general position and $\{ u_h \}$ be any QE sequence of $L^2$-normalized Laplace eigenfunctions. Then, there exist constants $ h_0>0, \, c_{\Sigma}, \,C_{\Sigma} >0$ such that for all $h \in (0,h_0],$

\begin{equation}\label{crude upper bound}
c_{\Sigma} \leq  \| T_{\Sigma} u_h \|_{L^2(\Sigma)} \leq C_{\Sigma} h^{-1/2}.
\end{equation}

If in addition,
\begin{equation} \label{condition}
N_z\Sigma \cap T_z S^*M = \{0\}, \quad \forall z \in \Sigma \cap S^*M,
\end{equation} 
then the upper bound is improved to
$$
\|T_{\Sigma}u_h\|_{L^2(\Sigma)}\leq C_{\Sigma}.
$$
so that under the additional assumption (\ref{condition}), 
\begin{equation} \label{comparable}
c_{\Sigma} \leq \|T_{\Sigma}u_h\|_{L^2(\Sigma)}\leq C_{\Sigma}.
\end{equation}

\end{theorem}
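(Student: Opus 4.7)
I would apply Theorem \ref{thm1} with $a\equiv 1$. The general position hypothesis makes the limit $c_0:=\int_{S^*M\cap\Sigma} q\,d\mu_\Sigma$ nonzero, while Calder\'on-Vaillancourt gives a uniform operator bound $\|\mathcal{P}_\Sigma(h)\|_{L^2(\Sigma)\to L^2(\Sigma)}\le M$ since $\mathcal{P}_\Sigma(h)\in\Psi^0_{sc}(\Sigma)$. Cauchy-Schwarz then produces $|c_0|+o(1) \le M\|T_\Sigma u_h\|_{L^2(\Sigma)}^2$, so $\|T_\Sigma u_h\|_{L^2(\Sigma)}\ge c_\Sigma$ for some $c_\Sigma>0$ and all small $h$.

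\textbf{Crude upper bound.} For the $h^{-1/2}$ bound I plan to apply the semiclassical trace inequality
\[
\|v|_\Sigma\|_{L^2(\Sigma)}^2 \le C h^{-1}\bigl(\|v\|_{L^2(M_\tau^{\mathbb C})}^2 + \|h\nabla v\|_{L^2(M_\tau^{\mathbb C})}^2\bigr),
\]
which follows from a divergence identity with a smooth vector field transverse to $\Sigma$, to $v = Tu_h$. The Sj\"{o}strand near-isometry of $T = T_{hol}(h)$ gives $\|Tu_h\|_{L^2(M_\tau^{\mathbb C})}=O(1)$ from the $L^2$-normalization of $u_h$. For the gradient term, differentiating (\ref{Tu_h}) yields $-\nabla\rho\cdot Tu_h$ (already $O(1)$ in $L^2$) plus $e^{-1/(2h)}e^{-\rho/h}h\nabla u_h^{\mathbb C}$, and the latter is $O(1)$ in $L^2$ by the standard Grauert-tube growth bounds on $u_h^{\mathbb C}$ and its first derivatives. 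Together these give $\|T_\Sigma u_h\|_{L^2(\Sigma)}\le C_\Sigma h^{-1/2}$.

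\textbf{Refined upper bound and main obstacle.} For the $O(1)$ upper bound under (\ref{condition}), my strategy is to exploit microlocal ellipticity of $\mathcal{P}_\Sigma(h)$ on the compact set $\mathcal{W}_\Sigma\supseteq WF_h(T_\Sigma u_h)$ produced by Proposition \ref{WF2}. The crucial claim to verify is that the principal symbol of $\mathcal{P}_\Sigma(h)$ from (\ref{explicit})---a trigonometric expression in the angles $\theta,\phi$ encoding how $\Sigma$ sits inside the complex geometry---remains bounded below on $\mathcal{W}_\Sigma$ exactly when (\ref{condition}) holds, any degeneration being traceable to tangency of $\Sigma$ with $S^*M$. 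Granting this ellipticity, G\aa rding's inequality gives
\[
c\|T_\Sigma u_h\|_{L^2(\Sigma)}^2 \le \mathrm{Re}\langle \mathcal{P}_\Sigma(h)T_\Sigma u_h, T_\Sigma u_h\rangle + O(h)\|T_\Sigma u_h\|_{L^2(\Sigma)}^2,
\]
and the right-hand side is $O(1)$ by Theorem \ref{thm1}, so $\|T_\Sigma u_h\|_{L^2(\Sigma)}=O(1)$. I expect this ellipticity verification to be the main technical hurdle: it requires an explicit computation of the principal symbol of (\ref{explicit}) together with the fine description of $\mathcal{W}_\Sigma$ in Proposition \ref{WF2}, with the aim of showing that (\ref{condition}) is precisely the nondegeneracy condition that prevents the symbol from vanishing on that set.
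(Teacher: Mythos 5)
Your lower bound matches the paper's exactly (Theorem \ref{thm1} with $a=1$, $L^2$-boundedness, Cauchy--Schwarz, general position); no comment needed there. Your crude $h^{-1/2}$ upper bound uses a direct divergence-type trace inequality together with bounds on $\|Tu_h\|_{L^2}$ and $\|h\nabla Tu_h\|_{L^2}$, whereas the paper's Lemma \ref{lemma of crude} goes through the Sobolev embedding $\|T_\Sigma u_h\|_{L^2(\Sigma)}\le C\|Tu_h\|_{H^{1/2}(\Omega)}$ and then uses the wavefront localization (Proposition \ref{WF1}) to render $Op_h((|x^*|^2+|\xi^*|^2+h)^{1/2})$ effectively $L^2$-bounded. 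The two are essentially equivalent and both yield $h^{-1/2}$; yours is a shade shorter but relies on a Grauert-tube derivative estimate $\|e^{-\rho/h}\,h\nabla u_h^{\mathbb C}\|_{L^2}=O(e^{1/2h})$ that you assert rather than derive, whereas the paper's route is self-contained given Proposition \ref{WF1}.

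The refined $O(1)$ upper bound is where your proposal has a genuine gap. You correctly identify the strategy --- ellipticity of ${\mathcal P}_\Sigma(h)$ on ${\mathcal W}_\Sigma$ followed by G\aa rding --- and you correctly observe that this is the technical crux, but you do not carry out the verification, and the piece of geometric intuition you offer in its place is wrong. You attribute a possible degeneration of the symbol to ``tangency of $\Sigma$ with $S^*M$,'' but (\ref{condition}), namely $N_z\Sigma\cap T_zS^*M=\{0\}$, says that the normal $\nu$ to $\Sigma$ is \emph{not tangent} to $S^*M$, i.e.\ $\langle\nu,\nabla\rho\rangle\neq 0$: this is non-\emph{orthogonality} of $\Sigma$ and $S^*M$ (in the paper's notation, $\theta\neq\pi/2$), not non-tangency. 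Moreover the ``exactly when'' in your claim is too strong: the paper shows that (\ref{condition}) is one of several non-exclusive \emph{sufficient} conditions (Proposition \ref{abc} lists alternatives (b),(c)). The actual verification in the paper requires real work you have not sketched: the decomposition $Q_1(h)=A(h)+iB(h)$ into self-adjoint pieces so that only $\Re\langle Q_1T_\Sigma u,T_\Sigma u\rangle=\langle A(h)T_\Sigma u,T_\Sigma u\rangle+O(h)$ matters; Lemma \ref{sum of squares} showing $\sigma(h^2\Delta_\Sigma-h^2X^2)\le 0$; the computation of $\Re V=|\nabla\rho|^2 f(\theta,\phi)$ with $f(\theta,\phi)=\cos^2\theta-2+\cos^2\phi+\sin\theta$; the constraint that $(\theta,\phi)$ ranges over a diamond-shaped admissible region inside which $\{f\ge 0\}$ touches only the two corner points $(\pi/2,0),(\pi/2,\pi)$; and an open-closed/compactness argument to get a uniform $\delta$ in (\ref{local condition}). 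Without this, ``granting this ellipticity'' leaves the heart of the theorem unproven. Note also that the sign in your G\aa rding step is reversed (the paper shows $\sigma(A)<0$, so one bounds $-\Re\langle\cdot\rangle$ from below), though that is cosmetic once absolute values are taken.
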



Rewriting Theorem \ref{bounds} in terms of the complexified eigenfunctions $u_h^{\C}$ gives the following weighted $L^2$ estimates:

\begin{theorem} \label{cor}
 Let $\Sigma \subset M_{\tau}^{\C}$ be a closed separating hypersurface in general position, and $\{ u_h \}$ be any QE sequence of $L^2$ normalized eigenfunctions. Then there exist constants $ h_0>0$ and $c_{\Sigma}, C_{\Sigma} >0$ such that for all $h \in (0,h_0],$
 $$ 
 c_{\Sigma} \leq \int_{\Sigma} e^{-2 \rho(z)/h} | u_h^{\C}(z)|^2 \,  dz d\overline{z}_{\Sigma} \leq C_{\Sigma} h^{-1}.
 $$
 Under the assumption that $N_z\Sigma \cap T_z S^*M = \{0\}, ~\forall z \in \Sigma \cap S^*M,$ these bounds improve to
 $$
 c_{\Sigma} \leq \int_{\Sigma} e^{-2 \rho(z)/h} | u_h^{\C}(z)|^2 \, dz d\overline{z}_{\Sigma} \leq C_{\Sigma}.
 $$
\end{theorem}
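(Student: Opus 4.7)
The proof is a direct corollary of Theorem \ref{bounds} via the explicit identity (\ref{Tu_h}) linking the FBI transform to the weighted analytic continuation of the eigenfunction. The plan is the following. Starting from
$$
T u_h(z) = e^{-1/(2h)}\, e^{-\rho(z)/h}\, u_h^{\C}(z),
$$
I take moduli squared to get the pointwise relation
$$
|T u_h(z)|^2 = e^{-1/h}\, e^{-2\rho(z)/h}\, |u_h^{\C}(z)|^2,
$$
and then integrate over $\Sigma$ against the induced surface measure $dz\, d\overline z_\Sigma$. This yields the identity
$$
\|T_\Sigma u_h\|_{L^2(\Sigma)}^2 = e^{-1/h} \int_\Sigma e^{-2\rho(z)/h}\, |u_h^{\C}(z)|^2 \, dz \, d\overline z_\Sigma,
$$
so that, up to the explicit exponential prefactor, the weighted integral in the statement is exactly the squared FBI $L^2$-restriction norm.

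The second step is simply to square the inequalities of Theorem \ref{bounds}: the general two-sided bound (\ref{crude upper bound}) becomes $c_\Sigma^2 \leq \|T_\Sigma u_h\|_{L^2(\Sigma)}^2 \leq C_\Sigma^2 h^{-1}$, and under the transversality hypothesis (\ref{condition}) the refined bound (\ref{comparable}) becomes $c_\Sigma^2 \leq \|T_\Sigma u_h\|_{L^2(\Sigma)}^2 \leq C_\Sigma^2$. Combining these with the identity above and relabelling constants yields the claimed bounds for the weighted integral in both cases. There is no real obstacle here: all the analytic content — the 2MQER identity of Theorem \ref{thm1} together with the wavefront set localization used to derive Theorem \ref{bounds} — has already been established upstream, so the present statement is essentially an algebraic reformulation through (\ref{Tu_h}). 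The only bookkeeping is verifying that the restricted surface measure $dz\, d\overline z_\Sigma$ used in the statement matches the Riemannian surface measure on $\Sigma$ implicit in the $L^2(\Sigma)$ norm of Theorem \ref{bounds}, which is immediate from the definition of $T_\Sigma$.
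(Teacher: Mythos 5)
Your overall approach — squaring the pointwise identity (\ref{Tu_h}) and invoking Theorem \ref{bounds} — is exactly the derivation the paper has in mind for Theorem \ref{cor} (which is stated as an immediate reformulation of Theorem \ref{bounds}). The remark about the hypersurface measure is also fine: $dz\,d\overline{z}_\Sigma$ and the Riemannian density $d\sigma$ on the compact hypersurface $\Sigma$ differ by a smooth, bounded, positive factor, so two-sided bounds carry over after adjusting the constants.

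However, the final step ``relabelling constants yields the claimed bounds'' does not go through as written, and this is a genuine gap. Your own pointwise identity is
$$
|T_\Sigma u_h(z)|^2 = e^{-1/h}\, e^{-2\rho(z)/h}\, |u_h^{\C}(z)|^2,
$$
so squaring and integrating gives
$$
\int_\Sigma e^{-2\rho(z)/h}\, |u_h^{\C}(z)|^2\, dz\, d\overline{z}_\Sigma \;=\; e^{1/h}\, \|T_\Sigma u_h\|^2_{L^2(\Sigma)}.
$$
Plugging in Theorem \ref{bounds} then yields $c_\Sigma^2\, e^{1/h} \le \int_\Sigma e^{-2\rho(z)/h} |u_h^{\C}(z)|^2\, dz\, d\overline{z}_\Sigma \le C_\Sigma^2\, h^{-1}\, e^{1/h}$ (and $c_\Sigma^2 e^{1/h} \le \cdots \le C_\Sigma^2 e^{1/h}$ under (\ref{condition})). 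The factor $e^{1/h}$ is $h$-dependent and blows up as $h\to 0^+$; it cannot be absorbed into the $h$-independent constants $c_\Sigma$, $C_\Sigma$. So your derivation does not yield the bounds as they are stated in Theorem \ref{cor}. What it does yield is the statement with weight $e^{-(2\rho(z)+1)/h} = e^{-1/h}e^{-2\rho(z)/h}$ in place of $e^{-2\rho(z)/h}$ (equivalently, with both sides multiplied by $e^{1/h}$). Note that on $\Sigma\cap S^*M$, where the mass of $T_\Sigma u_h$ concentrates by Proposition \ref{WF2}, one has $2\rho = |\xi|_g^2 = 1$, so the corrected weight $e^{-(2\rho+1)/h}$ is $O(e^{-2/h})$ there and the exponentials balance; with the weight $e^{-2\rho/h}$ alone they do not. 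You should have flagged this discrepancy rather than gliding over it with ``relabelling constants'': as stated, the theorem does not follow from the identity you correctly derived, and the correct conclusion carries the extra $e^{1/h}$ factor.
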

 
 We note that in Theorem \ref{cor}, $dz d\overline{z}_{\Sigma}$ denotes the restriction of the symplectic measure on $M^{\C}$ to $\Sigma.$

\subsubsection{Plan of the paper}
In section \ref{tube} we give some background on Grauert tubes and adapted FBI transforms in the sense of Sj\"{o}strand. In section \ref{WF} we give an explicit localization result for $WF_h (T_{\Sigma}u_h).$ Then, in section \ref{2mqer} we use this h-wave front localization combined with the Cauchy-Riemann equations adapted to a hypersurface $\Sigma \subset M_{\tau}^{\C}$ and the asymptotic formula in (\ref{qercd}) to prove Theorem \ref{thm1}. Finally in Section \ref{UL}, using the explicit formula for ${\mathcal P}_{\Sigma,a}(h)$ in (\ref{explicit}) together with appropriate applications of $L^2$-boundedness and G\aa rding inequality, we derive the upper and lower $L^2$-restriction bounds in Theorem \ref{bounds} and Theorem \ref{cor}. The appendix explicates the density $q$.

\subsection*{Acknowledgements}
X.X. wants to thank his supervisors John A. Toth and Dmitry Jakobson for help during his Ph.D. The weekly meetings are filled with informative discussions, from which he learned a great deal.

\section{Some Background} \label{tube}

\subsection{The Cauchy-Riemann equation}

The Grauert tube $M^{\mathbb{C}}_{\tau}$ of radius $\tau>0$ is the canonical complexification of a real-analytic Riemannian manifold $M$. It is an open K\"ahler manfold with many special properties (see for example \cite{LS91},\cite{S91},\cite{GS91},\cite{GS92} for further details). The geometry of the Grauert tube is completely determined by the underlying real Riemannian manifold. (See section 2.2.)

On $M^{\mathbb{C}}_{\tau}$, we consider the almost-complex structure defined as the unique endomorphism $J\in End(T^{\R}M^{\mathbb{C}}_{\tau})$ that is compatible with the K\"ahler-Riemannian metric $\tilde{g}$ and symplectic form $\omega$, in the sense that

$$
\tilde{g}(Y,Z)=\omega(Y,JZ),\quad\forall Y,Z\in \Gamma (T_{\R}M^{\mathbb{C}}_{\tau}).
$$
As an  example, when $ M^{\C} = \mathbb{C}^n \cong T^* \R^n,$
$$
\omega=\sum_{j=1}^ndx_j\wedge d\xi_j,\quad H_f=\sum_{j=1}^n\frac{\partial f}{\partial x_j} \frac{\partial}{\partial \xi_j}-\frac{\partial f}{\partial \xi_j}\frac{\partial}{\partial x_j},
$$ 
$$
J\partial_{x_j}=\partial_{\xi_j}, \quad J\partial_{\xi_j}=-\partial_{x_j},\quad H_f=J\nabla f.
$$
The sign convention is opposite to some references such as \cite{Zw12}.

The Cauchy-Riemann equation for $u^{\mathbb{C}}_h:M^{\mathbb{C}}_{\tau}\to\mathbb{C}$ can be written as
$$
du^{\mathbb{C}}_h\circ J(Y)=idu^{\mathbb{C}}_h(Y),\quad\forall Y\in \Gamma(T_{\R}M^{\mathbb{C}}_{\tau}).
$$
Let $\Sigma \subset M^{\C}_{\tau}$ be real, oriented, closed  hypersurface with unit outward normal vector field, $\nu.$ Then,

\begin{itemize}
    \item $J\nu$ is tangent to $\Sigma,$ since $\tilde{g}(J\nu,\nu)=\omega(J\nu,J\nu)=0$.
    \item $J\nu$ is non-vanishing, since $J$ is nondegenerate and $\nu\neq 0$.
\end{itemize}
When $u^{\mathbb{C}}_h$ holomorphic in $M^{\C}_{\tau}$, we note that
\begin{equation}\label{CReqs}    
\begin{split}
J\nu(u^{\mathbb{C}}_h)&=i\partial_{\nu}u^{\mathbb{C}}_h\\
    -J\nu(\overline{u^{\mathbb{C}}_h})&=i\partial_{\nu}\overline{u^{\mathbb{C}}_h}.
\end{split}
\end{equation}

Assume that the hypersurface $\Sigma$ has a defining function $F:M^{\C} \to \R$ with $\Sigma=\{F(z)=0\}$ and $|\nabla F|_{\tilde{g}}=1$ on $\Sigma$.
Fixing a local coordinate system in $M$, the canonical symplectic form on $T^*M$ is $\omega=dx\wedge d\xi$, and then
\begin{equation}\label{Jnu coords}
J\nu=H_{F}=\partial_xF\cdot \partial_{\xi}-\partial_{\xi}F\cdot \partial_x.
\end{equation}

In Section 4, we will write $X=J\nu$ for simplicity. It should be noted that $X$ is a real tangent vector belonging to the real tangent space of $\Sigma$ at a point $z\in \Sigma\subset M^{\mathbb{C}}_{\tau}$.

\subsection{Complexified heat kernel and the FBI transform}

\subsubsection{ Grauert tubes and analytic $h$-pseudodifferential calculus} Let $M$ be a compact, closed, real-analytic manifold of dimension $n$ and $M^{\C}$ denote a Grauert tube complex thickening of $M$ which is a totally real submanifold. By Bruhat-Whitney, there exists a {\em maximal} Grauert tube radius $\tau_{\max} >0$ \cite{Z07} such that for any $ \tau \leq \tau_{\max}$, the complex manifold $M^{\C}$ can be identified with $B^*_{\tau} := \{ (x, \xi) \in T^*M; \sqrt{\rho}(x,\xi) \leq \tau \}$ where $\sqrt{2\rho} = |\xi|_g$ is the exhaustion function using the complex geodesic exponential map $ \kappa : B^*_{\tau} \rightarrow  M^{\C}  $ with  $\kappa(x,\xi) = \exp_{x}(- i \xi).$ From now on, we fix $\tau \in (0, \tau_{\max}).$
Under this indentification, we  let $z$ denote local complex coordinates in $B^*_{\tau}$ and recall that $B_{\tau}^*$ is also naturally a K\"ahler manifold with potential function $\rho$ with associated symplectic form
$ \partial \overline{\partial} \rho = \omega.$  The complex K\"ahler, symplectic and Riemannian structures are all linked via the isomorphism $\kappa: B_{\tau}^*M \to M^{\mathbb C}.$ Denoting the almost complex structure by $J: T^{\R} M^{\mathbb{C}} \to T^{\R} M^{\mathbb{C}}$.
\begin{eqnarray} \label{grauertformula}
\omega = \partial \overline{\partial} \rho, \quad \omega = d \alpha, \quad \alpha = \Im \overline{\partial} \rho,
\end{eqnarray}
where the strictly plurisubharmonic function $\rho$ solves the homogeneous Monge-Ampere equation \cite{GS91} 
  
$$ \big(  \overline{\partial} \partial \sqrt{\rho} \big)^n (z) = 0, \quad z \in M^{\mathbb C}\setminus M.$$
  
The $\kappa$-corresponding objects  on $B_{\tau}^*M$ are given by
\begin{eqnarray} \label{grauertformula2}
  \kappa^*\omega  = \sum_{j=1}^n dx_j \wedge d\xi_j = \frac{1}{i} \kappa^* \partial \overline{\partial} \rho, \,\,\, \, \kappa^* \omega = d \alpha, \,\, \alpha = \sum_i \xi_i dx_i,\nonumber \\
 \kappa^*\rho(x,\xi) = \frac{1}{2} |\xi|_x^2 = \frac{1}{2} g^{ij}(x) \xi_i \xi_j. \hspace{1in}
 \end{eqnarray}
 
 In the following, we will freely identify $B_{\tau}^*M$ and $M^{\mathbb C}$ and drop reference to the isomorphism  $\kappa$ when the context is clear. As was pointed out in the previous section, the Riemannian K\"ahler metric $\tilde{g}$ on $B_\tau^* M$ associated with $\omega$ is given by
  $$ \tilde{g}(u,v) = \omega (u, Jv)$$
  where $J$ is the almost complex structure on $B_\tau^*M$ induced by $\kappa.$

 Fix $p_0 \in M$ and let $x: U \to {\mathbb R}^n$ be geodesic normal coordinates centered at $p_0$ with $x(p_0) = 0.$ Then since $J_{(p_0,\xi)} (\partial_{x_j}) = \partial_{\xi_j}$ and $J_{(p_0,\xi)}(\partial_{\xi_j}) = - \partial_{x_j}$ and the base metric $g^{ij}(x) = \delta_j^i + O(|x|^2)$ (in particular, $\partial_{x_j} g^{kl}(0) = 0$),  it follows that 
 $ \tilde{g}_{(p_0,\xi)} = |dx|_{(p_0,\xi)}^2 + |d\xi|_{(p_0,\xi)}^2.$   As a result,
 $$ \nabla_{\tilde{g}} \rho (p_0,\xi) =   \sum_{j=1}^{n} \partial_{x_j}  \big( \frac{1}{2} g^{kl}(x) \xi_k \xi_l \big) |_{x=0}  \partial_{x_j} + \partial_{\xi_j} \big( \frac{1}{2} g^{kl}(x) \xi_k \xi_l \big)|_{x=0} \partial_{\xi_j}   = \sum_{j} \xi_j \partial_{\xi_j}.$$
 
 Given that  $\omega$ in (\ref{grauertformula}) is non-degenerate with $\omega = d\alpha$ there is a unique invariant  vector field $\Xi$ solving  $\iota_\Xi \omega = \alpha.$ Moreover (see \cite{GS91} section 5), $\Xi$ satisfies 
 $\Xi \rho = 2 \rho$ and $\kappa^* \Xi = \xi \cdot \partial_{\xi}$. Since $\xi \cdot \partial_{\xi}$ and $\nabla_{\tilde{g}} \kappa^*\rho$ are consequently both invariant vector fields on $B_\tau^*M$ which agree at $(p_0,\xi)$ in geodesic normal coordinates, they must agree in all local coordinates $x$ near $p_0$. Since $p_0 \in M$ is arbitrary, by making the usual identification of $B_\tau^*M$ with $M^{\C}$, it follows that
 
    \begin{equation} \label{grad}
\nabla_{\tilde{g}} \rho (x,\xi)  = \sum_{j=1}^n \xi_j \partial_{\xi_j}, \quad (x,\xi) \in B_{\tau}^*M.
\end{equation}

From (\ref{grad}) and the argument above, it also follows that
 \begin{equation} \label{gradnorm}
\| \nabla_{\tilde{g}} \rho \|_{\tilde{g}}^2 = 2 \rho.
\end{equation}

The associated K\"ahler Laplacian is 
$$ \Delta_{\overline{\partial}} = \overline{\partial}^* \overline{\partial} = 2 \Delta_{\tilde{g}}$$
where the latter denotes the Riemannian Laplacian with respect to $\tilde{g}$ on $B_\tau^*M.$
In the following, to simplify notation, we will write $\nabla := \nabla_{\tilde{g}}$ and $\Delta := \Delta_{\overline{\partial}}.$

Let $ -h^2 \Delta_{\overline{\partial} }: C^{\infty}_0(B^*_{\tau}) \to C^{\infty}_0(B^*_{\tau})$ denote the semiclassical K\"ahler Laplacian with $-h^2 \Delta_{\overline{\partial}} = - 2 h^2 \Delta_{\tilde{g}}.$  By possibly rescaling the semiclassical parameter $h$ we assume without loss of generality that the characteristic manifold
$ p^{-1}(0) \subset B^*_{\tau}.$

\subsubsection{Fermi coordinates near a hypersurface $\Sigma \subset B_{\tau}^*M$} \label{FERMI}

Given a smooth oriented hypersurface $\Sigma \subset B_{\tau}^*M$ we let $(\beta',\beta): U_{\Sigma} \to {\mathbb R}^{2n}$ be
normalized Fermi coordinates in a tubular neighbourhood $U_{\Sigma}$ of $\Sigma$ with $\Sigma = \{ \beta = 0 \}$ and
$\partial_\beta$ the unit exterior normal to $\Sigma.$ In terms of these coordinates the conjugated Laplacian $ |\tilde{g}|^{1/4} \Delta_{\tilde{g}} |\tilde{g}|^{-1/4}$ can be written in the form
\begin{equation} \label{fermi expansion}
|\tilde{g}|^{1/4} (-h^2 \Delta_{\tilde{g}} ) \, |\tilde{g}|^{-1/4}= (h D_{\beta})^2 + R(\beta,\beta'; hD_{\beta'}),
\end{equation}

where $R(\beta,\beta',hD_{\beta'})$ is a second-order $h$-differential operator in the tangential $\beta'$-variables and 
$R(0,\beta', hD_{\beta'}) =  -h^2 \Delta_{\Sigma}$ where $\Delta_{\Sigma}$ is the Riemannian Laplacian on the hypersuface $\Sigma$  induced by the metric $\tilde{g}.$ In the following, we abuse notation and denote the conjugated Laplacian simply by $-h^2 \Delta_{\tilde{g}}$ and $| \tilde{g}|^{1/4} u_h$ by $u_h.$

\subsubsection{FBI transform} \label{SS:FBI}
Let $U\subset T^*M$ be open. Following \cite{S82}, we say that $a \in S^{m,k}_{cla}(U)$  provided $a \sim h^{-m} (a_0 + h a_1 + \dots)$ in the sense that
\begin{equation*}
\partial_{x}^k \partial_{\xi}^l \overline{\partial}_{(x,\xi)} a = O_{k,l}(1) e^{- \langle \xi \rangle/Ch}, \quad (x,\xi)\in U, 
\end{equation*}
and for $(x , \xi) \in U$,
\begin{equation*}
  \Big| a - h^{-m} \sum_{0 \leq j \leq \langle \xi \rangle/C_0 h} h^{j} a_j \Big| = O(1) e^{- \langle \xi \rangle/C_1 h},\quad
 |a_j| \leq C_0 C^{j} \, j ! \, \langle \xi \rangle^{k-j}.
 \end{equation*}
We sometimes write $S^{m,k}_{cla}=S^{m,k}_{cla}(T^*M)$. The symbol $a \in S^{m,k}_{cla}$ is $h$-elliptic provided
$ |a(x,\xi)| \geq C h^{-m} \langle \langle \xi \rangle^k$ for all $(x,\xi) \in T^*M.$ In the smooth non-analytic case, we say that $a \in S^{m,k}_{cl}(T^*M)$ if $a \sim h^{-m} (a_0 +h a_1 + \cdots)$ in the (standard) sense that $ a - h^{-m} \sum_{j=0}^M a_j h^j \in S^{k -j} $ where 
$S^{k}:= \{ q \in C^{\infty}(T^*M);   |\partial_{x}^{\alpha} \partial_{\xi}^{\beta} q | = O(\langle \xi \rangle^{k-|\beta|}) \}.$

As in \cite{S82}, given an $h$-elliptic, semiclassical analytic symbol $a \in S^{3n/4,n/4}_{cla}(M \times (0,h_0]),$  we consider an intrinsic FBI transform $T(h):C^{\infty}(M) \to C^{\infty}(T^*M)$ of the form

\begin{equation} \label{FBI}
T u(x,\xi;h) = \int_{M} e^{i\phi(x,\xi,y)/h}  a(x,\xi,y,h) \tilde{\chi}( x, y) u(y) \, dy \end{equation}
In (\ref{FBI}), the cutoff $\tilde{\chi} \in C^{\infty}_{0}(M \times M)$ is supported in a  small fixed neighbourhood of $\text{diag}(M) = \{ (x,x) \in M \times M \}.$ The phase function is required to satisfy $\phi(x,\xi,x) = 0, \, \partial_y \phi(x,\xi,x) = - \xi$ and
$$ \Im (\partial_y^2 \phi)(x,\xi, x) \sim | \langle \xi \rangle | \, Id.$$

In particular, it follows that the phase $\phi$ satisfies
\begin{eqnarray} \label{phase}
\Re \phi(x,\xi,y) = \langle x-y, \xi \rangle + O(|x-y|^2 \langle \xi \rangle), \nonumber \\
\Im \phi(x,\xi,y)  = \frac{1}{2} |x-y|^2 \big( 1 + O(|x-y|) \big)  \langle \xi \rangle.
\end{eqnarray}

Given $T(h) :C^{\infty}(M) \to C^{\infty}(T^*M)$, it follows by an analytic stationary-phase argument \cite{S82} that one can construct an operator $S(h): C^{\infty}(T^*M) \to C^{\infty}(M)$ of the form
\begin{equation} \label{left}
 S v(x;h) = \int_{T^*M} e^{-i  \, \overline{\phi(x,y,\xi)}  /h} d(x,y,\xi,h) v(y,\xi) \, dy d\xi \end{equation}
with $d \in S^{3n/4,n/4}_{cla}$ such that $S(h)$  is a left-parametrix for $T(h)$ in the sense that

\begin{equation} \label{leftparametrix}
S(h) T(h) = Id + R(h),\qquad\partial_{x}^{\alpha} \partial_{y}^{\beta} R(x,y,h) = O_{\alpha, \beta}(e^{-C/h}).
\end{equation}

We also note that with the normalizations in (\ref{FBI}) an application of analytic stationary phase as in (\ref{left}) shows that there exists $e_0 \in S^{0}_{cla}$  h-elliptic such that 
$$ T^*(h) e_0 T(h) = S(h)  T(h) +O(h)_{L^2 \to L^2},$$
and consequently, it follows that
$$ \| T u_h \|_{L^2}  \approx \| u_h \|_{L^2} \approx 1.$$

We say that an operator $P(h)$ is an \emph{analytic $h$-pseudodifferential operator (analytic $h$psdo) of order $m,k$} on $M$ (i.e. $ P \in \Psi^{m,k}_{cla}(M)$) if for
 $p \in S^{m,k}_{cla}$ with $p \sim \sum_{j=0}^\infty p_j h^{j-m+k},$  $p_j \in S^{m-j}_{cla},$
 \begin{equation} \label{psdo}
 P(h) = S(h) p T(h) +O(e^{-C/h})_{L^2 \to L^2}. \end{equation}
 
  The kernel of $P(h)$ can then be written as $P(x,y;h)=K(x,y;h)+R(x,y;h)$ where for all $\alpha,\beta$,
$$|\partial_x^\alpha \partial_y^\beta R(x,y)|\leq C_{\alpha\beta} e^{-c_{\alpha\beta}/h}, \,\,\, c_{\alpha \beta} >0,$$
and
$$K(x,y;h)=\frac{1}{(2\pi h)^n}\int e^{\frac{i}{h}\langle x-y,\xi \rangle} e^{-|x-y|^2 \langle \xi \rangle /h} \,  \tilde{p}(x,\xi,h) \,d\xi$$
where  $p \in S^{m,k}_{cla}$ with $\tilde{p}_0 = p_0.$  We use the standard notation $P(h)= p(x,hD) \in \Psi^{m,k}_{cla}(M)$ for the $h$-quantization in (\ref{psdo}). 
In the smooth case, where $p \in S^{m,k}_{cl},$ we will also define $P(h)$ as in (\ref{psdo}) and write $P \in \Psi^{m,k}_{cl}(M).$ Moreover, in the special case where $k=0,$ we simply write $\Psi^{m}_{h}:= \Psi^{m,0}_{cl}$ in the following.





It is convenient to choose here a particular FBI transform, $T_{hol}(h): C^{\infty}(M) \to C^{\infty}(B_{\tau}^*M)$ that is compatible with the complex structure in the Grauert tube $B_{\tau}^*M.$ This transform is readily described  in terms of the holomorphic continuation of the heat operator $e^{t \Delta_g}$ at time $t = h/2.$ 

We briefly recall here some background on the operator $T_{hol}(h): C^{\infty}(M) \to C^{\infty}(M_{\tau}^{\C})$ and refer the reader to \cite{GLS96} and \cite{GT19} for further details.

\subsubsection{Complexified heat operator on closed, compact manifolds} \label{heat}
Consider the heat operator of $(M,g)$ defined at time $h/2$ by $$E_{h}=e^{\frac{h}{2}\Delta_g}:C^{\infty}(M) \to C^{\infty}(M).$$

By a result of Zelditch \cite[Section 11.1]{Z12}, the maximal geometric tube radius $\tau_{\max}$ agrees with the maximal analytic tube radius in the sense that for all $ 0<\tau < \tau_{\max}$, all the eigenfunctions $\varphi_j$ extend holomorphically to $M_\tau^\C$ (see also \cite[Prop. 2.1]{GLS96}). In particular, the kernel $E(\cdot,\cdot;h)$ admits a holomorphic extension to $B^*_{\tau} M \times B^*_{\tau}M$
for all $0<\tau < \tau_{\max}$  and $h \in (0,1)$, \cite[Prop. 2.4]{GLS96}. We denote the complexification by $E_h^\C( \cdot, \cdot)$.
To recall asymptotics for $E^{\C}_h$ we note that the squared geodesic distance on $M$
$$r^2(\cdot, \cdot): M \times M \to {\mathbb R}$$
holomorphically continues in both variables to $M_{\tau}^{\C} \times M_{\tau}^{\C}$ in a straightforward fashion. 
More precisely, $0<\tau<\tau_{\max}$, there exists a connected open neighbourhood $\tilde \Delta \subset M_\tau^\C \times M_\tau^\C$ of the diagonal $\Delta \subset M\times M$ to which $r^2(\cdot, \cdot)$ can be holomorphically extended \cite[Corollary 1.24]{GLS96}. We denote the extension
by $r_\C^2(\cdot,\cdot) \in \mathcal O(\tilde \Delta)$. Moreover, one can easily recover the exhaustion function $\sqrt{\rho_g}(z)$ from $r_{\C}$; indeed, 
$\rho_g(z)=-r^2_\C(z, \bar{z})$
for all $z \in B^*_{\tau}M$.

The basic asymptotic behaviour of $E_h^{\C}(z,y)$ with $(z,y) \in B^*_{\tau}M \times M$ is studied in \cite{GLS96}. In particular,

\begin{equation}\label{hol heat}
E_h^\C(z,y)=e^{-\frac{r^2_\C(z,y)}{2h}} b^\C(z,y; h) + O (e^{-\beta/h}), \quad (z,y) \in B_{\tau}^*M \times M.
\end{equation}
Here, $ \beta>0$ is a constant depending on $(M,g,\tau)$ and
\begin{equation} \label{heat symbol}
b^\C \sim \sum_{k=0}^{\infty} b_{k}^{\C}  h^{k - \frac{n}{2} } \in S^{n/2,0}_{cla}; \,\,\, b_k^{\C} \in S^{0,0}_{cla}, \,\, k=0,1,2,...,
\end{equation}
  where the $b_k^\C$'s denote the analytic continuation of the coefficients appearing in the formal solution
of the heat equation on $(M,g).$ In the following, to simplfiy notation,  we will simply write $b_k = b_k^{\C}; \, k=0,1,2,...$  for the symbols in the expansion (\ref{heat symbol}).
 \medskip

The K\"ahler potential
\begin{equation} \label{Kahler potential}
2\rho(z) =  \Re r^2_{\C}(z, \Re z) = \frac{1}{4} r_{\C}^2(z, \bar{z}) =  |\xi|_{x}^2 \end{equation}
where, $z = \exp_{x} ( -i \xi).$

Using (\ref{Kahler potential}) and the expansion in (\ref{hol heat}) it is proved in \cite [Theorem 0.1]{GLS96}  that the operator $T_{hol}(h): C^{\infty}(
M) \to C^{\infty}(M_{\tau}^{\C})$ given by
\begin{equation} \label{holFBI}
T_{hol} \phi_h (z) = h^{-n/4} \int_{M} e^{ [ - r_{\C}^2(z,y)/2 - \rho(z)]/h} b^{\C}(z,y,h) \chi(x,y) \phi_h(y) dy, \quad z \in B_{\tau}^* \end{equation}
is also an FBI transform in the sense of \eqref{FBI} with $h$-elliptic amplitude  $b \in S^{n/2,0}_{cla}$ and phase function
$\phi(z,y) = i  \Big( \, \frac{ r_{\C}^2(z,y)}{2} + \rho(z) \, \Big). $
In \eqref{holFBI} the multiplicative factor $h^{-n/4}$ is added to ensure $L^2$-normalization so that $\| T_{hol} \phi_h \|_{L^2(M_{\tau}^{\C})} \approx  1.$

Since $u_h$ are eigenfunctions of the Riemannian Laplacian on $(M,g)$ with eigenvalue $h^{-2}$ it follows by analyic continuation that 

$$ e^{-1/2h} u^{\C}(z) = E^{\C}(h) u_h (z); \quad z \in B_{\tau}^*M.$$

Consequently, in view of (\ref{holFBI}),
\begin{equation} \label{comp}
T_{hol}(h) u_h (z) = e^{-\rho(z)/h} E^{\C}(h) u_h (z) = e^{-1/2h} e^{-\rho(z)/h} u_h^{\C}(z). \end{equation} \

Using (\ref{hol heat}), it follows that the left parametrix $S_{hol}(h)$ in (\ref{left}) satisfies
$$  T^*_{hol}(h) |b_0|^{-2} T_{hol}(h) = S_{hol}(h) T_{hol}(h)  + O(h)_{L^2 \to L^2},$$
where $b_0 \in S^{0}_{cla}$ is $h$-elliptic principal symbol in (\ref{hol heat}).
Since $S(h) a T(h) = Op_{h}(a) + O(h^{\infty}),$ the semiclassical anti-Wick quantization, it follows by an application of standard 
$h$-pseudodifferential calculus that for any $a \in C^{\infty}_{0}(B^*_{\tau}),$
\begin{equation} \label{key psdo}
\langle a T_{hol} u_h, T_{hol} u_h \rangle_{L^2(B_{\tau}^*M)} = \langle Op_h(|b_0|^2 a) u_h, u_h \rangle_{L^2(M)} + O(h). \end{equation}  

As indicated in the introduction, we fix the FBI transform in the following and set $T = T_{hol}.$

\section{Localization of the semiclassical wave front of $T_{\Sigma} u_h$} \label{WF}
\subsection{Semiclassical wavefront}\label{seclower}

We begin with a discussion of the ambient wave front $WF_h (Tu_h).$

\begin{proposition}\label{WF1}
Let $(M,g)$ be a compact $C^{\omega}$ Riemannian manifold and $\{ u_h \}$ be {\em any} sequence of $L^2$-normalized Laplace eigenfunctions on $M.$ Then, the semiclassical wavefront set of $Tu_h$ satisfies
    \begin{equation}\label{wf1}
        \begin{split}
            WF_h(Tu_h)\subset\big\{(x,\xi :&x^*,\xi^*)\in T^*(B_{\tau}^*M): |\xi|_g=1, \, \xi^* = 0, \, x^* = \xi \big\}.
            \end{split}
    \end{equation}
 In particular, $WF_h(Tu_h)$ is a  compact subset of $T^* (B_{\tau}^*M).$
\end{proposition}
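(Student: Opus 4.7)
The plan is to prove the inclusion in two independent steps: a base localization to $\{|\xi|_g=1\}$ coming from a pointwise Gaussian concentration estimate on $Tu_h$, and a fiber localization coming from the Cauchy--Riemann equation for $u_h^{\C}$ pushed forward through the weight $e^{-\rho/h}$. Compactness will then be automatic.

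For the base projection, I would invoke the standard Grauert tube growth bound
$|u_h^{\C}(z)|\leq C h^{-N_0} e^{\sqrt{2\rho(z)}/h}$
for complexified Laplace eigenfunctions. Combined with (\ref{comp}), this yields
$$
|Tu_h(z)|\;\leq\; C h^{-N_0}\,\exp\!\bigl(-(1-\sqrt{2\rho(z)})^2/(2h)\bigr),
$$
which is exponentially small on any compact subset of $B_{\tau}^*M\setminus\{|\xi|_g=1\}$. Multiplying by any cutoff $\chi\in C_0^{\infty}$ supported there gives $\|\chi Tu_h\|_{L^2}=O(e^{-c/h})$, so the fiber of $WF_h(Tu_h)$ over every base point with $|\xi|_g\neq 1$ is empty.

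For the fiber constraint I would exploit the holomorphicity of $u_h^{\C}$. Differentiating the identity $Tu_h=e^{-1/(2h)}e^{-\rho/h}u_h^{\C}$ in anti-holomorphic directions and using $\bar\partial u_h^{\C}=0$ gives the \emph{exact} equations
$$
L_j Tu_h \;:=\;\bigl(h\partial_{\bar z_j}+\partial_{\bar z_j}\rho\bigr)Tu_h \;=\;0,\qquad j=1,\dots,n.
$$
Writing $z_j=x_j-i\xi_j$ in the canonical real coordinates, a short calculation gives the semiclassical principal symbol
$$
\sigma_h(L_j)(x,\xi;x^*,\xi^*) \;=\; \tfrac{1}{2}\bigl(\xi^*_j+\partial_{x_j}\rho\bigr)+\tfrac{i}{2}\bigl(x^*_j-\partial_{\xi_j}\rho\bigr),
$$
whose joint vanishing locus $\bigcap_j\{\sigma_h(L_j)=0\}$ imposes $\xi^*_j=-\partial_{x_j}\rho$ and $x^*_j=\partial_{\xi_j}\rho$. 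Choosing geodesic normal coordinates at the base point reduces this to $\xi^*=0$ and $x^*=\xi$, which is the desired fiber condition.

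To conclude, I would apply semiclassical microlocal elliptic regularity to each $L_j$: at any point where $\sigma_h(L_j)\neq 0$, the operator admits a standard microlocal parametrix, and the exact equation $L_j Tu_h=0$ then forces $WF_h(Tu_h)\subset\mathrm{char}(L_j)$. Intersecting over $j$ and combining with the base restriction gives the claimed inclusion, and compactness is immediate since $\{|\xi|_g=1\}$ is compact in $B^*_{\tau}M$ and the fiber variables $(x^*,\xi^*)$ are uniquely pinned down on it. The main subtlety I anticipate is handling the complex-valued operators $L_j$ in the microlocal inversion; however, because $L_j Tu_h=0$ holds identically (and not merely modulo $O(h^\infty)$), the argument reduces to a standard semiclassical elliptic parametrix construction rather than anything depending on the $h$-principal-type or real/imaginary structure of $L_j$.
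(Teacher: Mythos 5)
Your proof is correct, and it takes a genuinely different route from the paper's. The paper establishes the base localization $\{|\xi|_g=1\}$ by citing the energy-concentration result of \cite{GT19}, whereas you derive it directly from the classical pointwise growth bound $|u_h^{\C}(z)|\lesssim h^{-N_0}e^{\sqrt{2\rho(z)}/h}$ together with the exact identity \eqref{comp}; the two routes are essentially equivalent in strength and both depend on standard exterior input. The real divergence is in the fiber localization. The paper writes $Tu_h = TS\,Tu_h + O(e^{-C/h})$ and extracts the constraints $x^*=\xi$, $\xi^*=0$ by explicit non-stationary-phase/integration-by-parts estimates on the resulting oscillatory integral, controlling the imaginary part of the phase via $\Im\Phi\gtrsim|\beta_x-x|^2$. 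You instead note that $Tu_h$ satisfies the \emph{exact} system of first-order equations $L_j Tu_h=(h\partial_{\bar z_j}+\partial_{\bar z_j}\rho)Tu_h=0$, whose joint characteristic variety is precisely the graph of the canonical one-form over $S^*M$, and then invoke semiclassical microlocal elliptic regularity. This is arguably the more structural argument: it isolates the single geometric fact (holomorphicity of $u_h^{\C}$ conjugated by the FBI weight) responsible for the fiber pinning, and it sharpens the paper's own ``crude'' observation $P_\rho Tu_h=0$ (conjugated \emph{second}-order Laplacian, giving only $|x^*|=1$) to the full system of $n$ first-order equations. The one place you should be slightly more careful is the symbol computation: the identity $z_j=x_j-i\xi_j$ and hence $\partial_{\bar z_j}=\tfrac12(\partial_{x_j}-i\partial_{\xi_j})$ holds only pointwise at the center of geodesic normal coordinates (as the paper notes when computing $\nabla_{\tilde g}\rho$), so your formula for $\sigma(L_j)$ is to be read at each base point in its own normal chart; since the principal symbol depends only on the vector field at the point and ellipticity is an open condition, this is enough, but it deserves a sentence. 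Compactness follows exactly as you say, since each $L_j$ is elliptic at fiber infinity in the $(x^*_j,\xi^*_j)$ direction and the intersection over $j$ covers all directions.
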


\begin{proof}

Since $u_h$  are Laplace eigenfunctions on $M$, they $h$-microlocally concentrates near the cosphere bundle $\{|\xi|_g^2=1\}\subset M^{\mathbb{C}}_{\tau}$. In particular, in the real analytic case  (see \cite{GT19} Prop. 2.3) given any cutoff $ \chi_{\epsilon} \in C^{\infty}_0 (\R)$ with supp $\chi_{\epsilon} \subset [-2\epsilon, 2\epsilon] \}$ and $\chi_{\epsilon} |_{[-\epsilon, \epsilon]} = 1,$ for any $k \in \N,$

$$ \| (1- \chi_{\epsilon}) ( |\xi|_g -1) \, T u_h \|_{C^{k}} = O_{k} (e^{-C_{\epsilon}/h}), \quad C_{\epsilon} >0.$$
In the smooth case, the exponential decay $O(e^{-C/h})$ is replaced with $O(h^{\infty}).$
In particular, it follows that $$
WF_h(Tu_h)\subset \{(x,\xi,x^*,\xi^*)\in T^*(M ^{\mathbb{C}}_{\tau}):|\xi|_g=1\}.
$$

In the following, we set $\chi_{\epsilon}^+:= (1-\chi_{\epsilon}).$
To further $h$-microlocalize near $x^*= \xi$ and $\xi^*=0,$ we use a reproducing formula for $Tu_h$.  We would like to express the function $Tu_h$ as an average of the value of itself. 
Recall that given FBI transform $T$ with
$$
Tu_h(\beta)=\int_Me^{i\varphi(\beta,y)/h}a(\beta,y;h)\chi(\beta_x,y)u_h(y)dy.
$$
by the left parametrix construction in (\ref{leftparametrix}), there exists some $b\in S_{cla}^{\frac{3n}{4},\frac{n}{4}}$, 
$$
Sv_h(x)=\int_{T^*M}e^{-i\varphi^*(\alpha,x)/h}b(\alpha,x;h)\chi(\alpha_x,x)v_h(\alpha)d\alpha.
$$
with $STu_h=u_h+Ru_h,\quad R\in O(e^{-C/h})$. 

We use the reproducing formula
\begin{equation} \label{reproduce}
T u_h =  T S Tu_h + O(e^{-C/h}).
\end{equation}

Writing (\ref{reproduce}) out explicitly, we have

\begin{align} \label{reproduce2}
Tu_h(\beta) = \int_M \int_{T^*M}e^{\frac{i}{h}[\varphi(\beta,x) -\varphi^*(\beta',x)]}c(\beta,\beta',x) Tu_h(\beta')d\beta' dx  +O(e^{-C/h}), \nonumber \\
 c(\beta,\beta',x):= a(\beta,x)b(\beta',x) \chi(\beta_x,x) \chi(\beta'_x,x) \hspace{1in}
\end{align}

In the following, we denote the total phase in (\ref{reproduce2}) by
$$\Phi(\beta,\beta',x):= \varphi(\beta,x)-\varphi^*(\beta',x).$$
Then, using (\ref{reproduce}) we note that writing $\alpha = (\alpha_x,\alpha_{\xi}), \, \beta = (\beta_x, \beta_{\xi}),$
\begin{align} \label{cutoff1}
\chi_{\epsilon}^+( hD_{\beta_x} - \beta_{\xi}) Tu(\alpha) =  h^{-2n} \int \cdots \int e^{i  \big( \langle \alpha - \beta, \beta^* \rangle  + \Phi(\beta,\beta',x)  \big)/h } \, \chi_{\epsilon}^+ (\beta_x^*- \beta_{\xi})  \nonumber \\
\times c(\beta,\beta',x) Tu_h(\beta')d\beta d\beta' dx d\beta^*+O(h^{\infty}).
\end{align}

Since

$$ \partial_{\beta_x} \big( \langle \alpha - \beta, \beta^* \rangle + \Phi(\beta,\beta',x) \big)  = - \beta_x^* + \beta_{\xi} +  i E(\beta,x),$$
where $|E(\beta,x)| \leq C |\beta_x-x|$ and $\Re E = 0.$  We  then decompose (\ref{cutoff1}) further to  the sets where $C |\beta_x-x| < \ep/2$ and $ C |\beta_x-x| > \ep/2$ respectively. More precisely, we write
\begin{align} \label{cutoff2}
h^{2n}\chi_{\epsilon}^+( hD_{\beta_x} - \beta_{\xi}) Tu(\alpha) \hspace{4in} \nonumber \\
=\int \cdots \int e^{i  \big( \langle \alpha - \beta, \beta^* \rangle  + \Phi(\beta,\beta',x)  \big)/h } \, \chi_{\epsilon}^+ (\beta_x^*- \beta_{\xi})  \chi_{\ep/2}^+ (\beta_x -x) c Tu_h(\beta')d\beta d\beta' dx  d\beta^* \nonumber \\
+   \int \cdots \int e^{i  \big( \langle \alpha - \beta, \beta^* \rangle  + \Phi(\beta,\beta',x)  \big)/h } \, \chi_{\epsilon}^+ (\beta_x^*- \beta_{\xi})  \chi_{\ep/2}(\beta_x -x) c Tu_h(\beta')d\beta d\beta' dx d\beta^*
\end{align}

To bound the first integral in (\ref{cutoff2}) we note that 
$$ \Im \Phi(\beta,\beta',x) \geq C |\beta_x-x|^2 \geq C \epsilon^2$$
when $|\beta_x -x| \geq \ep/2$ and so, this integral is clearly $O(e^{-C \ep^2/h}).$ 

As for the seoond integral, we note that when $C |\beta_x -x| < \ep/2$ and $|\beta_{\xi} - \beta_x^*| > \ep,$
$$ |  \beta_x^* - \beta_{\xi} + E(\beta,x) | \geq \ep - \frac{\ep}{2} \geq \frac{\ep}{2},$$
and one can integrate by parts with respect to $ L_1 =  \frac{ \partial_{\beta_x} ( \langle \alpha - \beta, \alpha^* \rangle  + \Phi) \cdot h D_{\beta_x}}{  | \partial_{\beta_x} ( \langle \alpha - \beta, \alpha^* \rangle  + \Phi)  |^2}$ using that 
$$ L_1 ( e^{i ( \langle \alpha - \beta, \alpha^* \rangle  + \Phi )/h} ) = e^{i ( \langle \alpha - \beta, \alpha^* \rangle  + \Phi )/h}$$
and the fact that the denominator 
$$ | \partial_{\beta_x} ( \langle \alpha - \beta, \beta^* \rangle  + \Phi)  |^2 \geq \frac{\ep}{2}$$
on the support of the integrand. The result is that
$$
h^{-2n} \int \cdots \int e^{i  \big( \langle \alpha - \beta, \beta^* \rangle  + \Phi  \big)/h } \, \chi_{\epsilon}^+ (\beta_x^*- \beta_{\xi})  \chi_{\ep/2}(\beta_x -x) c Tu_h(\beta')d\beta d\beta' dx d\alpha^* = O(h^{\infty})
$$

Consequently, it follows that
\begin{equation}\label{wf3}
 \text{WF}_h(Tu_h)\subset \big\{(x,\xi :x^*,\xi^*)\in T^*(B_{\tau}^*M): |\xi|_g=1, \, \, x^* = \xi \big\}.
\end{equation}

To complete the proof, we note that
$$
\partial_{\beta_{\xi}} \big( \langle \alpha - \beta, \beta^* \rangle + \Phi(\beta,\beta',x) \big)  = \beta_x - x  - \beta_{\xi}^*  + i \tilde{E}(\beta,\beta',x),
$$
where $\tilde{E}(\beta,\beta',x) = O(|\beta_x -x|^2)$ and $\Re \tilde{E} = 0.$ By the same argument as above, one can then write

\begin{align} \label{wf4}
h^{2n} \chi_{\epsilon}^+( hD_{\beta_{\xi}} ) Tu(\alpha) \hspace{4in} \nonumber \\
= \int \cdots \int e^{i  \big( \langle \alpha - \beta, \beta^* \rangle  + \Phi(\beta,\beta',x)  \big)/h } \, \chi_{\epsilon}^+ (\beta_{\xi}^*)  \chi_{\ep/2}^+ (x-\beta_x ) c Tu_h(\beta')d\beta d\beta' dx d\beta^* \nonumber \\
+ \int \cdots \int e^{i  \big( \langle \alpha - \beta, \beta^* \rangle  + \Phi(\beta,\beta',x)  \big)/h } \, \chi_{\epsilon}^+ (\beta_{\xi}^*)  \chi_{\ep/2}(x-\beta_x ) cTu_h(\beta')d\beta d\beta' dx d\beta^*
\end{align}

As in (\ref{wf3}), for the first term on the RHS of (\ref{wf4}), we note that when $|x-\beta_x | \geq \ep,$
$\Im \Phi \geq C \epsilon^2$ and so the first integral is $O(e^{-C \epsilon^2/h}).$
As for the second integral, when $|x-\beta_x | \leq \frac{\ep}{2}$ and $|\beta_{\xi}^*| >\epsilon,$ it follows that after possibly shrinking $\epsilon>0$ further,
$$  \partial_{\beta_{\xi}} \big( \langle \alpha - \beta, \beta^* \rangle + \Phi(\beta,\beta',x,y) \big)  \geq \frac{\ep}{2}.$$
One can then integrate by parts with respect to 

 $ L_2=  \frac{ \partial_{\beta_{\xi}} ( \langle \alpha - \beta, \beta^* \rangle  + \Phi) \cdot h D_{\beta_{\xi}}}{  | \partial_{\beta_{\xi}} ( \langle \alpha - \beta, \alpha^* \rangle  + \Phi)  |^2}$ using that 
$$ L_2 ( e^{i( \langle \alpha - \beta, \beta^* \rangle  + \Phi)/h} ) = e^{i ( \langle \alpha - \beta, \beta^* \rangle  + \Phi)/h}$$
and the fact that the denominator 
$$ | \partial_{\beta_{\xi}} ( \langle \alpha - \beta, \alpha^* \rangle  + \Phi)  |^2 \geq \frac{\ep}{2}$$
on the support of the integrand. The result is that

$$ \int \cdots \int e^{i  \big( \langle \alpha - \beta, \beta^* \rangle  + \Phi  \big)/h } \, \chi_{\epsilon}^+ (\beta_{\xi^*})  \chi_{\ep/2}(x-\beta_x) c Tu_h(\beta')d\beta d\beta' dx d\beta^* = O(h^{\infty}).$$

Since in the above $\ep>0$ is fixed arbitrarily small, in view of (\ref{wf3}), this completes the proof of the Proposition.

\end{proof}

In the following we set
\begin{equation} \label{W}
{\mathcal W}:= \big\{(x,\xi :x^*,\xi^*)\in T^*(B_{\tau}^*M): |\xi|_g=1, \, \xi^* = 0, \, x^* = \xi \big\}.
\end{equation}

Since ${\mathcal W} \subset T^* (B_\tau^*M)$ is compact, by $C^{\infty}$ Urysohn lemma we let $\chi_{{\mathcal W}}\in C^{\infty}_0(T^*B^*_{\tau}M)$ with $\chi_{{\mathcal W}} (x,\xi,x^*,\xi^*) = 1 $ for $(x,\xi,x^*,\xi^*)$ in a Fermi neighbourhood of ${\mathcal W}$ in $T^*B^*_{\tau}M.$

\subsubsection{Sharpness of (\ref{wf1})}
The result in Proposition \ref{WF1} is a refinement of the following straightforward estimate:
\begin{equation}\label{crude}
    WF_h(Tu_h)\subset \{(x,\xi,x^*,\xi^*):|\xi|_g=1,\xi^*=0,|x^*|_{\tilde{g}}=1\}.
\end{equation}
Proposition \ref{WF1} improves the last condition $|x^*|=1$ in (\ref{crude}) to the more precise $x^*=\xi$. 

To show (\ref{crude}), notice that the function $Tu_h$ solves
$$
P_{\rho}(h)Tu_h=0,
$$
where $P_{\rho}(h)=e^{-\rho/h}\cdot(-h^2\Delta_{\tilde{g}})\cdot e^{\rho/h}$ is the conjugated K\"ahler Laplacian. This operator was studied in detail in \cite{CT24}, see Section 5 there. To determine its principal symbol, we compute
\begin{align*}
    P_{\rho}f&=-e^{-\rho/h}h^2\Delta (e^{\rho/h}f)\\
    &=-e^{-\rho/h}\big(h^2(\Delta e^{\rho/h})f+2\langle h\nabla e^{\rho/h},h\nabla f\rangle+e^{\rho/h}h^2\Delta f\big),
\end{align*}
in which the first term is 

\begin{align*}
    -e^{-\rho/h}f h^2\Delta e^{\rho/h}
    &=-e^{-\rho/h} f h\text{div}(h\nabla e^{\rho/h})\\
    &=-e^{-\rho/h} f h\text{div}(e^{\rho/h}\nabla \rho)\\
    &=-e^{-\rho/h} f \big(\langle h\nabla e^{\rho/h},\nabla\rho\rangle+e^{\rho/h}h\Delta\rho\big)\\
    &=-e^{-\rho/h} f e^{\rho/h}\langle\nabla\rho,\nabla \rho\rangle+O(h)\\
    &=-|\nabla\rho|^2f+O(h).
\end{align*}
Therefore, the principal symbol is

$$
    \sigma(P_{\rho})=
    |(x^*,\xi^*)|_{\tilde{g}}^2+2i\langle\nabla_{\tilde{g}}\rho,(x^*,\xi^*)\rangle_{\tilde{g}}-|\nabla_{\tilde{g}}\rho|_{\tilde{g}}^2.
$$

By standard wavefront calculus, 

$$
WF_h(Tu_h)\subset \{(x,\xi,x^*,\xi^*)\in T^*(M^{\mathbb{C}}_{\tau}):\sigma(P_{\rho})=0\}=\{\Re\sigma=0\}\cap\{\Im\sigma=0\}.
$$
From the energy localization \cite{GT19} we know that $|\xi|_g^2=1$, thus

$$
\{\Im\sigma=0\}=\{\langle \xi,\xi^*\rangle=0\}=\{\xi^*=0\}.
$$
On the other hand,

$$
\{\Re\sigma=0\}=\{|x^*|^2+|\xi^*|^2=1\}=\{|x^*|^2=1\}.
$$
where we used  $|\nabla\rho|^2=2\rho=|\xi|^2_g=1$.

The following example makes the comparison transparent. 

\begin{Example}
Take $M=\mathbb{R}^1/\mathbb{Z}^1$ to be the flat circle with $\rho=\xi^2/2$. Then

$$
\sigma(P_{\rho})=0\Longleftrightarrow
\begin{cases}
(x^*)^2+(\xi^*)^2-\xi^2=0\\
\xi \xi^*=0\\    
\end{cases}
$$

Direct wavefront calculus gives
\begin{equation}\label{4 circles}
WF_{h}(Tu_h)\subset\{(x,\xi,x^*,\xi^*):x\in \mathbb{R}/\mathbb{Z},~\xi=\pm1,~x^*=\pm1,~\xi^*=0\}.    
\end{equation}
The right-hand side consists of four circles.

Proposition \ref{WF1} gives
\begin{equation}\label{2 circles}
WF_{h}(Tu_h)\subset\{(x,\xi,x^*,\xi^*):x\in \mathbb{R}/\mathbb{Z},~\xi=x^*,~x^*=\pm1,~\xi^*=0\}.    
\end{equation}
The right-hand side consists of two circles.

One can show that the two sides in (\ref{2 circles}) are actually equal. To see this, consider subsequences of the $L^2$ orthonormal basis on the flat circle, $u_{1/k}(x)=e^{-ikx}$ and $v_{1/k}(x)=e^{+ikx}$. Then $u^{\mathbb{C}}_{1/k}(z)=e^{-ik(x-i\xi)}$. Note that $z$ is identified with $x-i\xi$ not $x+i\xi$, by the construction in the previous section. Now compute

$$
Tu_{1/k}(x,\xi)=e^{-k\xi^2/2}e^{-ik(x+i\xi)}=e^{k/2}e^{-\frac{k}{2}(\xi+1)^2}e^{-ikx}.
$$

Clearly, this sequence concentrates near $x\in \mathbb{R}/\mathbb{Z},~ \xi=-1$. We will apply the semiclassical Fourier transform $(x,
\xi)\to(x^*,\xi^*)$ and see that its frequency variables concentrate near $x^*=-1,~\xi^*=0$.

\begin{equation*}
    \begin{split}
        \mathcal{F}_h(Tu_h)(x^*,\xi^*)
        &=e^{\frac{1}{2h}}\int_{\mathbb{R}^2}e^{-\frac{i}{h}(xx^*+\xi\xi^*)}e^{-\frac{(\xi+1)^2}{2h}}e^{-\frac{i}{h}x}dxd\xi\\
        &=e^{\frac{1}{2h}}\int_{\mathbb{R}}e^{-\frac{i}{h}xx^*}e^{-\frac{i}{h}x}dx\int_{\mathbb{R}}e^{-\frac{i}{h}\xi\xi^*}e^{-\frac{(\xi+1)^2}{2h}}d\xi\\
        &=\mathcal{F}_h(1)(x^*+1)\cdot\mathcal{F}_h(e^{-\frac{(\xi+1)^2}{2h}})(\xi^*)\\
        &=\delta_{-1}(x^*)\frac{1}{\sqrt{2\pi h}}e^{-\frac{(\xi^*)^2}{2h}}e^{-\frac{i}{h}\xi^*}
        \end{split}
\end{equation*}

in the sense of oscillatory integrals. As a result, the weak* limit is

$$
\mathcal{F}_h(Tu_h)\xrightarrow[w^*]{h\to 0^+}\delta_{x^*=-1}\otimes\delta_{\xi^*=0}.
$$

That shows $WF_h(Tu_h)=\{(x,-1,-1,0):x\in\R/\Z\}$, thus this sequence fills in one of the limit circles in (\ref{2 circles}).

Similarly, the other sequence $v_{1/k}$ with $(\xi-1)^2$ replacing $(\xi+1)^2$ fills the other limit circle with $\xi=1,~x^*=1$. These two sequences of trigonometric functions form the complete $L^2$ orthonormal basis on the flat circle.
\end{Example}

As a result, Proposition \ref{WF1} is sharp in the sense that it  is saturated in the case of the circle.

\subsection{The wavefront set $WF_h(T_{\Sigma} u)$}
In the proof of Theorem \ref{thm1}, we will need to show localization (i.e. compactness) of the restricted wavefront $WF_h (T_{\Sigma} u_h).$ In this subsection, we do this by deriving the relation between $WF_h(Tu_h)$ and $WF_h(T_{\Sigma}u_h).$ 

Before stating this result, we review some background on the symplectic geometry. Let $\Sigma = \{ (x,\xi) \in B_\tau^*M;  F(x,\xi) = 0 \}$ where $dF |_{\Sigma} \neq 0.$ Considering $F$ as  function on $T^* (B_\tau^*M)$ (constant in the fiber coordinates $(x^*,\xi^*)$) it follows that the Hamilton vector field of $F$ with respect to the canonical symplectic form $\Omega = dx \wedge dx^* + d\xi \wedge d\xi^*$ on $T^* B_\tau^*M$ is just
$$ - X_F =  \partial_x F  \, \partial_{x^*} + \partial_{\xi}F \, \partial_{\xi^*}$$
and the associated Hamilton flow is given by
\begin{equation} \label{flow}
\exp \, \tau X_F(x,\xi,x^*,\xi^*) = (x,\xi; 0,0)  - \tau \, (0,0, \partial_x F,  \partial_{\xi}F ).
\end{equation}
We also recall that given a closed hypersurface $\Sigma \subset B^*_{\tau}M$, there is a natural projection map $\pi_{\Sigma}: T^*_{\Sigma} B^*_{\tau}M \to T^*\Sigma$. Indeed if $(u' ;u_{2n}) = (u_1,...,u_{2n-1}; u_{2n})$ denote Fermi coordinates in a neighbourhood of $\Sigma$ with $\Sigma = \{ u_{2n} = 0 \}$ and let $(\eta',\eta_{2n})$ be the corresponding fibre coordinates. In these coordinates, the projection map is $\pi_{\Sigma}(u',0; \eta',\eta_{2n}) = (u',\eta').$

\begin{proposition} \label{WF2}
Let $(M,g)$ be a compact $C^{\omega}$ Riemannian manifold, $\{ u_h \}$ be {\em any} sequence of $L^2$-normalized Laplace eigenfunctions on $M$ and $\Sigma=\{ F = 0 \}$ with $dF |_{\Sigma} \neq 0,$ be a compact hypersurface in the Grauert tube $B_{\tau}^*M.$ Then,     \begin{equation}\label{wf1'}
        WF_h(T_{\Sigma}u_h)\subset \bigcup_{|\tau| \leq 1} \exp \tau X_{F} \, \pi_{\Sigma}({\mathcal W}),
        \end{equation}
    where ${\mathcal W}$ is defined in (\ref{W}).
In particular, $WF_h(T_{\Sigma}u_h)$ is a  compact subset of $T^*\Sigma$.
\end{proposition}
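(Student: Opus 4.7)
The strategy is to combine the ambient wavefront bound $WF_h(Tu_h) \subset \mathcal{W}$ from Proposition~\ref{WF1} with an oscillatory-integral representation of the restriction operator $\iota^* : C^\infty(B_\tau^*M) \to C^\infty(\Sigma)$. Since $\mathcal{W}$ is compact, standard semiclassical calculus gives a cutoff $\chi_{\mathcal W} \in C^\infty_0(T^*B_\tau^*M)$, supported in a small neighborhood of $\mathcal W$ and equal to $1$ near $\mathcal W$, with $\mathrm{Op}_h(\chi_{\mathcal W}) Tu_h = Tu_h + O(h^\infty)_{C^k}$. Hence $T_\Sigma u_h = \iota^* \mathrm{Op}_h(\chi_{\mathcal W}) Tu_h + O(h^\infty)$, and the problem reduces to computing the semiclassical wavefront of the right-hand side.

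Working in Fermi coordinates $(u',u_{2n})$ near $\Sigma=\{u_{2n}=0\}$ adapted to $F$ (so $\partial_{u_{2n}} F \neq 0$ on $\Sigma$), represent the $\delta$-restriction via the identity
$$
\iota^*f(u') = (2\pi h)^{-1}\int_{\mathbb R}\int e^{i\tau F(z)/h} f(z)\,\psi(z)\,dz\,d\tau
$$
up to a nonvanishing Jacobian from $|\partial_{u_{2n}} F|$. Inserting this together with the oscillatory kernel of $\mathrm{Op}_h(\chi_{\mathcal W})$ into $T_\Sigma u_h$ and taking its semiclassical Fourier transform against $e^{-i\langle u',v^*\rangle/h}\psi_0(u')$, one obtains an iterated oscillatory integral with total phase
$$
\Phi = -\langle u',v^*\rangle + \langle u'-z_1,\eta'\rangle - z_{2n}\eta_{2n} + \tau F(z)
$$
and amplitude supported where $(z,(\eta',\eta_{2n}))$ lies near $\mathcal W$.

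Critical-point analysis in $u'$ forces $\eta' = v^*$, while stationarity in $z$ gives $(\eta',\eta_{2n}) = \tau\,dF(z) + (v^*,0)$, so that the fiber direction $(v^*,0)\in T^*_\Sigma B_\tau^*M$ arises from a point of $\mathcal W$ via the $X_F$-flow for time $\tau$. Equivalently, one sees that $v^*\in T^*\Sigma$ lies in $\pi_\Sigma(\mathcal W\cap T^*_\Sigma B_\tau^*M)$ lifted by $\exp\tau X_F$. Outside this locus, the phase $\Phi$ has no critical point in the $(u',z)$-variables, and iterated non-stationary-phase integration-by-parts yields $O(h^\infty)$. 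The $\tau$-range is bounded since $\chi_{\mathcal W}$ has compact fiber support (as $\mathcal W$ is compact): the bound $|\eta_{2n}|\leq C$ arising from $\mathcal W\subset \{|x^*|=|\xi|=1,\xi^*=0\}$ forces $|\tau|\leq 1$ after suitably normalizing $F$. This gives the claimed containment, and the compactness of $WF_h(T_\Sigma u_h)$ then follows from compactness of $\mathcal{W}$ and of the $\tau$-interval.

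The main obstacle is the potential non-transversality of $\mathcal W$ to the conormal bundle $N^*\Sigma$: the classical wavefront restriction theorem would require $WF_h(Tu_h)\cap N^*\Sigma=\emptyset$, which need not hold here. Inserting the auxiliary oscillatory factor $\delta(F) = (2\pi h)^{-1}\int e^{i\tau F/h}\,d\tau$ and tracking the flow generated by its phase along $X_F$ is precisely the device that bypasses this obstruction, with the bound $|\tau|\leq 1$ encoding the compact support of $\mathcal W$ and ensuring that the enlarged wavefront set on $\Sigma$ remains compact, as required.
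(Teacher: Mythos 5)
Your approach is genuinely different from the paper's. Where the paper works from the reproducing identity $T_\Sigma u_h = T_\Sigma S\,Tu_h + O(e^{-C/h})$ and carries out a direct critical-point analysis of the explicit FBI phase $\Phi = \varphi - \varphi^*$ in local coordinates on $\Sigma$ (splitting into the two implicit-function-theorem cases $\partial_{\xi_n}F\neq 0$ and $\partial_{x_n}F\neq 0$, yielding (\ref{wfr6}) and (\ref{wfr10})), you instead take the ambient containment $WF_h(Tu_h)\subset\mathcal W$ of Proposition~\ref{WF1} as an abstract input and try to deduce the restricted bound via $\mathrm{Op}_h(\chi_{\mathcal W})Tu_h = Tu_h + O(h^\infty)$ combined with an oscillatory representation of $\iota^*$. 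This modular strategy — a semiclassical ``restriction of wavefront'' argument that does not re-open the FBI kernel — is a legitimately different route and, in principle, cleaner; it would cover any $f_h$ with compact $h$-wavefront and suitable $C^k$ control.

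However, as written the argument has two genuine gaps. First, the oscillatory-integral representation of restriction is incorrect: the formula $\iota^*f(u') = (2\pi h)^{-1}\int\!\int e^{i\tau F(z)/h} f(z)\psi(z)\,dz\,d\tau$ integrates over \emph{all} of $z$, producing a number, not a function of $u'$; the intended identity is $f(u',0)=(2\pi h)^{-1}\!\int\!\int e^{i\tau u_{2n}/h} f(u',u_{2n})\,du_{2n}\,d\tau$, a one-dimensional integral in the normal coordinate only, and the resulting total phase $\Phi$ would then contain no free $z$-variable at all — so the phase you display, which simultaneously carries a $-z_{2n}\eta_{2n}$ from evaluating $\mathrm{Op}_h(\chi_{\mathcal W})$ at $(u',0)$ and a $\tau F(z)$ delta term, double-counts the restriction and is inconsistent. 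Second, and more seriously, the step ``stationarity in $z$ gives $(\eta',\eta_{2n})=\tau\,dF(z)+(v^*,0)$'' is not justified: the integrand carries the factor $Tu_h(z)$, which is itself a rapidly oscillating function whose phase does not appear in your $\Phi$. You cannot apply stationary phase in $z$ to the explicit part of the phase alone — the critical-point equation you derive would be altered by the (unknown-to-you) derivative of the phase of $Tu_h$. This is precisely why the paper passes through the reproducing formula $T S$: it replaces the black box $Tu_h(z)$ by an explicit oscillatory integral in $(\beta',x)$, making the full phase available for the critical-point computation in (\ref{wfr4}), (\ref{wfr8}). The correct version of your argument should \emph{not} differentiate the phase in $z$; the localization should come entirely from the support of $\chi_{\mathcal W}$ in the amplitude, after which the $u'$-stationarity gives $\eta'=v^*$ and the $\tau$-stationarity gives $u_{2n}=0$, at which point the support condition $(u',0,v^*,-\tau)\in\mathcal W$ (together with $|\eta_{2n}|\le 1$ on $\mathcal W$ to get $|\tau|\le 1$) yields the conclusion. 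With that repair your route goes through, but as stated the $z$-stationarity is a gap.
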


\begin{proof}Just as in section \ref{seclower}, the starting point is the reproducing formula (\ref{reproduce}). Given a compact hypersurface $\Sigma \subset B_{\tau}^*M,$ restriction of (\ref{reproduce}) to $\Sigma$ gives
\begin{equation} \label{wfr1}
T_\Sigma u_h =  T_{\Sigma}S T u_h + O(e^{-C/h}).
\end{equation}

Let $\Sigma = \{ F(x,\xi) = 0 \}$ where $dF |_{\Sigma} \neq 0.$  Given a point $q_0  \in \Sigma$ it follows by possibly reordering coordinates that either $\partial_{\xi_n} F(q_0) \neq 0$ or $\partial_{x_n} F (q_0) \neq 0.$

Assume first that $\partial_{\xi_n} F (x,\xi) \neq 0$ for all $(x,\xi) \in \Sigma \cap U$ near $q_0.$ Then, by the implicit function theorem $ \Sigma \cap U = \{ (x, \xi', \xi_n = G(x,\xi') \}$ where $G \in C^{\infty}_{loc}.$ Consequently, one can use $\alpha_{\Sigma} := (\alpha_x, \alpha_{\xi'})$ as local coordinates on $\Sigma \cap U$ and we denote canonical dual coordinates by $\alpha_{\Sigma}^*:= (\alpha_x^*,\alpha_{\xi'}^*).$ It follows that

\begin{align} \label{wfr2}
Op_{h,\Sigma}(c)  T_{\Sigma} u_h (\alpha_{\Sigma})  \hspace{4in} \nonumber \\
= \int e^{i \Psi_{\Sigma}(\alpha_{\Sigma}, \beta_{\Sigma}, \beta_{\Sigma}^*, \beta',x)/h} \, c(\alpha_{\Sigma}, \beta_{\Sigma}) a (\beta_{\Sigma}, \beta',x) Tu_h(\beta') d\beta'  dx d\beta_{\Sigma} d \beta_{\Sigma}^*  + O(e^{-C/h}),
\end{align}
where the total phase
\begin{equation} \label{wfr3}
\Psi_{\Sigma}(\alpha_{\Sigma}, \beta_{\Sigma},\beta',x) = \langle \alpha_{\Sigma} - \beta_{\Sigma}, \beta_{\Sigma}^* \rangle + \Phi( \beta_\Sigma, G(\beta_{\Sigma}); \beta',x).
\end{equation}

The critical point equations in the $\beta_{\Sigma} = (\beta_x, \beta_{\xi'})$ variables are
\begin{eqnarray} \label{wfr4}
\partial_{\beta_x} \Psi_{\Sigma} = - \beta_{x}^* + \partial_{\beta_x} \Phi + \partial_{\beta_{\xi_n}} \Phi \cdot \partial_{\beta_x} G = 0, \nonumber \\
\partial_{\beta_{\xi'}} \Psi_{\Sigma} = - \beta_{\xi'}^* + \partial_{\beta_{\xi'}} \Phi  + \partial_{\beta_{\xi_n}} \Phi \cdot \partial_{\beta_{\xi'}} G = 0,
\end{eqnarray}

which give 

\begin{eqnarray} \label{crit}
 - \beta_{x'}^* + \beta_{\xi'}  = O(|\beta_{x} -x|),  \,\,\,\, -\beta_{x_n}^* + G(x,\xi') = O(|\beta_x -x|), \,\,\,\,
  \beta_{\xi'}^* = O(|\beta_x -x|). 
\end{eqnarray}

In (\ref{crit}) we have used that $\partial_{\beta_{\xi}} \Phi = O(|\beta_x -x|)$ and $\partial_{\beta_x} \Phi = \beta_{\xi}  + O(|\beta_x -x|).$ Then, using the fact that $\Im \Phi \geq C |\beta_x -x|^2 $ it follows by a simliar integration-by-parts argument as in the proof of Proposition \ref{WF1} that for any $\ep >0,$ and with $G = G(x,\xi'),$

\begin{eqnarray} \label{wfr5}
\chi_{\ep}^{+}( hD_{\beta_{x'}} - \beta_{\xi'}, h D_{\beta_{x_n}} - G) = O_{L^2 \to L^2}(h^{\infty}), \nonumber \\
\chi_{\ep}^{+}( hD_{\beta_{\xi'} }) = O_{L^2 \to L^2}(h^{\infty}).
\end{eqnarray}

It then follows from (\ref{wfr5}) and eigenfunction energy concentration that locally near $q_0 \in \Sigma$ with $\partial_{\xi_n} F(q_0) \neq 0,$

\begin{eqnarray} \label{wfr6}
\text{WF}_h (T_{\Sigma} u |_{U} ) \subset \pi_{\Sigma}({\mathcal W}), \end{eqnarray}
where
$$ \pi_{\Sigma}({\mathcal W}) = \big\{ (x,\xi', x^*, \xi'^*) \in T^* U ; \,\, x'^* = \xi', \, x_n^* = G(x,\xi'), \xi'^* = 0, \, |(\xi', G) |_{g(x)} = 1  \big\}.$$

Next, we consider the case where $q_0 \in \Sigma $ with $\partial_{x_n} F (q_0) \neq 0.$ Then, again by the implicit function theorem, there exists a neighbourhood $B_\tau^* M \supset V \ni q_0$  with 
$$ \Sigma \cap V = \{ (x',\xi); x_n = H(x',\xi), \, (x,\xi) \in V \}, \quad H \in C_{loc}^{\infty}(\R^{2n-1}).$$
We use $\alpha_{\Sigma}:= (\alpha_{x'},\alpha_\xi)$ as local coordinates on $\Sigma \cap V$ and denote the canonical dual coordinates by $\alpha_{\Sigma}^* = (\alpha_{x'}^*,\alpha_\xi^*).$

\begin{align} \label{wfr7}
Op_{h,\Sigma}(c)  T_{\Sigma} u_h (\alpha_{\Sigma})  \hspace{4in} \nonumber \\
= \int e^{i \Psi_{\Sigma}(\alpha_{\Sigma}, \beta_{\Sigma}, \beta_{\Sigma}^*, \beta',x)/h} \, c(\alpha_{\Sigma}, \beta_{\Sigma}) a (\beta_{\Sigma}, \beta',x) Tu_h(\beta')   d\beta'  dx d\beta_{\Sigma} d \beta_{\Sigma}^*   + O(e^{-C/h}),
\end{align}
where the total phase
\begin{equation} \label{wfr3'}
\Psi_{\Sigma}(\alpha_{\Sigma}, \beta_{\Sigma},\beta',x) = \langle \alpha_{\Sigma} - \beta_{\Sigma}, \beta_{\Sigma}^* \rangle + \Phi( \beta_{x'}, H(\beta_{x'},\beta_\xi), \beta_{\xi}; \beta',x).
\end{equation}

The critical point equations in $\beta_{\Sigma} = (\beta_{x'},\beta_\xi)$ are

\begin{eqnarray} \label{wfr8}
\partial_{\beta_{x'}} \Psi_{\Sigma} = - \beta_{x'}^* + \partial_{\beta_{x'}} \Phi + \partial_{\beta_{x_n}} \Phi \cdot \partial_{\beta_{x'}} H = 0, \nonumber \\
\partial_{\beta_{\xi}} \Psi_{\Sigma} = - \beta_{\xi}^* + \partial_{\beta_{\xi}} \Phi  + \partial_{\beta_{x_n}} \Phi \cdot \partial_{\beta_{\xi}} H = 0,
\end{eqnarray}
which give

\begin{eqnarray} \label{crit2}
 - \beta_{x'}^* + \beta_{\xi'}  + \beta_{\xi_n} \, \partial_{\beta_{x'}} H  = O(|\beta_{x} -x|), \nonumber \\
  - \beta_{\xi}^* + \beta_{\xi_n} \, \partial_{\beta_{\xi}} H  = O(|\beta_x -x|). 
\end{eqnarray}

Then, using the fact that $\Im \Phi \geq C |\beta_x -x|^2 $ it follows by a simliar integration by partial argument as in the proof of Proposition \ref{WF1} that for any $\ep >0,$ and with $H = H(x',\xi),$

\begin{eqnarray} \label{wfr9}
\chi_{\ep}^{+}( hD_{\beta_{x'}} - \beta_{\xi'} - \beta_{\xi_n} \partial_{\beta_{x'}}H) = O_{L^2 \to L^2}(h^{\infty}), \nonumber \\
\chi_{\ep}^{+}( hD_{\beta_{\xi} } - \beta_{\xi_n} \partial_{\beta_{\xi}} H) = O_{L^2 \to L^2}(h^{\infty}).
\end{eqnarray}

It then follows from (\ref{wfr9}) and eigenfunction energy concentration that locally near $q_0 \in \Sigma$ with $\partial_{x_n} F(q_0) \neq 0,$

\begin{equation} \label{wfr10}
\text{WF}_h (T_{\Sigma} u |_{V} ) \subset \big\{ (x',\xi, x'^*, \xi^*) \in T^* V ; \,\, x'^* = \xi' +  \, \xi_n \, \partial_{x'} H, \,  \xi^* =  \xi_n \, \partial_{\xi}H, \, |\xi |_{g(x',H)} = 1  \big\}.
\end{equation}

From (\ref{wfr6}), it follows that
$$ \text{WF}_h (T_{\Sigma} u |_{U} ) \subset \pi_{\Sigma} |_{U}({\mathcal W})$$
and from (\ref{wfr10}) and energy localization of eigenfunctions on  $ S^*M = \{ |\xi|^2_{x} = |\xi'|^2_{x} + |\xi_n|^2\\ = 1 \},$ it follows that in (\ref{wfr10}), $|\xi_n|\leq 1.$ But then from (\ref{flow}) and (\ref{wfr10}) it follows that

 $$\text{WF}_h (T_{\Sigma} u |_{V} ) \subset  \bigcup_{|\tau| \leq 1} \exp \tau X_H \, \pi_{\Sigma}|_{V} {\mathcal W}.$$ 

Since $\Sigma$ can be covered by finitely many open sets of the form $U$ and $V$ the proposition follows.
\end{proof}

In the following we set 

\begin{equation} \label{W2}
{\mathcal W}_{\Sigma} :=  \bigcup_{|\tau| \leq 1} \exp \tau X_H \, \pi_{\Sigma}|_{V} {\mathcal W}.
\end{equation}

Since ${\mathcal W}_{\Sigma} \subset T^* \Sigma $ is compact, by $C^{\infty}$ Urysohn lemma we let $\chi_{\Sigma} \in C^{\infty}_0(T^*\Sigma)$ with $\chi_{\Sigma} (x,\xi,x^*,\xi^*) = 1 $ for $(x,\xi,x^*,\xi^*)$ in a fixed Fermi neighbourhood of ${\mathcal W}_\Sigma$ in $T^*\Sigma.$

\begin{remark}
    Since $ \big( \overline{\partial}_{\beta} + i \beta_{\xi} \big) \Phi(\beta,x)  = 0$, one gets $\big( \overline{\partial_b}_{\beta} + i \beta_{\xi} ) \Phi_{\Sigma}(\beta,x) = 0$ where $\beta \in \Sigma$ and $\overline{\partial_b }$ is the induced tangential CR vector field on $\Sigma$. 
\end{remark} 
\begin{remark}
    Since $\mathcal{W}_{\Sigma}$ is a flow-out of $\pi_{\Sigma}(\mathcal{W})$ along the fiber directions $(x^*,\xi^*)$, we have that the 2-microlocal $h$-singular support of $T_{\Sigma}u_h$ satisfies
    \begin{equation}\label{singsupp}
    \mathrm{sing~supp}_h(T_{\Sigma}u_h)\subset p_{\Sigma}(\mathcal{W}_{\Sigma})\subset S^*M\cap \Sigma
    \end{equation}
    where $p_{\Sigma}:T^*(\Sigma)\to \Sigma$ is the canonical projection.
\end{remark}

The following result is of independent interest. Roughly speaking, it says that to leading order any $Q \in \Psi_h^{0}(\Sigma)$ acting on $T_{\Sigma} u_h$ can be written as a multiplication operator.

\begin{proposition} \label{multiplier}
Let $\Sigma \subset B_{\tau}^*M$ be a real, oriented,  compact hypersurface. Then, given $Q(h) \in \Psi_h^{0}(\Sigma),$ there exists a {\em function} $q_{\Sigma} \in C^{\infty}(\Sigma)$ such that
$$ Q(h) T_{\Sigma} u_h = q_{\Sigma} \,T_{\Sigma} u_h + o(1) \| T_{\Sigma} u_h \|_{L^2}.$$
\end{proposition}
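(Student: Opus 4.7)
The plan is to exploit the microlocal graph structure of the wavefront set of $T_\Sigma u_h$ revealed in the proof of Proposition \ref{WF2}. While that proposition states $WF_h(T_\Sigma u_h) \subset \mathcal{W}_\Sigma$, the pointwise identities (\ref{wfr6}) and (\ref{wfr10}) in fact exhibit $WF_h(T_\Sigma u_h)$ as lying on the graph of a globally defined smooth section $\alpha:\Sigma \to T^*\Sigma$. Indeed, in the patch where $\partial_{\xi_n}F\neq 0$ the local expression is $\alpha(x,\xi')=(\xi',G(x,\xi');0)$, while in the patch where $\partial_{x_n}F\neq 0$ one has $\alpha(x',\xi) = (\xi'+\xi_n\partial_{x'}H,\, \xi_n\partial_\xi H)$; invariance of $WF_h$ guarantees consistency on overlaps. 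Given such $\alpha$, one expects any $Q(h) \in \Psi^0_h(\Sigma)$ with principal symbol $q=\sigma(Q)$ to act on $T_\Sigma u_h$, to leading order, as multiplication by the smooth function $q_\Sigma(s):=q(s,\alpha(s))$ on $\Sigma$.

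First, I would reduce to a compactly supported symbol using the cutoff $\chi_\Sigma\in C^\infty_0(T^*\Sigma)$ defined just after (\ref{W2}). Proposition \ref{WF2} gives $\chi_\Sigma(s,hD)T_\Sigma u_h = T_\Sigma u_h + O(h^\infty)\|T_\Sigma u_h\|_{L^2}$ in any Sobolev norm, so replacing $Q(h)$ by $Q(h)\chi_\Sigma(s,hD)$ modulo $O(h^\infty)$ errors reduces to the case when $q$ is compactly supported in $T^*\Sigma$. Next, Taylor-expand
\[
q(s,s^*)-q_\Sigma(s) \;=\; \sum_j \bigl(s^*_j - \alpha_j(s)\bigr)\,r_j(s,s^*)
\]
with smooth compactly supported $r_j$. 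Standard $h$-pseudodifferential calculus then yields the operator identity
\[
Q(h) - q_\Sigma(s) \;=\; \sum_j r_j(s,hD)\,\bigl(hD_{s_j}-\alpha_j(s)\bigr) \;+\; O(h)_{L^2\to L^2}.
\]

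Applying this identity to $T_\Sigma u_h$ reduces the proposition to establishing the bound
\[
\|(hD_{s_j}-\alpha_j(s))\,T_\Sigma u_h\|_{L^2(\Sigma)} \;=\; o(1)\,\|T_\Sigma u_h\|_{L^2(\Sigma)},
\]
which I expect to be the main obstacle. This should follow from the microlocal estimates of the form (\ref{wfr5})--(\ref{wfr9}) derived in the proof of Proposition \ref{WF2}, together with a standard G\aa rding-type argument: after the $\chi_\Sigma$-truncation the symbol $s^*_j-\alpha_j(s)$ vanishes on $WF_h(T_\Sigma u_h)$, so for every $\epsilon>0$ it can be decomposed into a piece of sup-norm less than $\epsilon$ and a piece supported microlocally away from $WF_h(T_\Sigma u_h)$. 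The first piece contributes $O(\epsilon)\|T_\Sigma u_h\|_{L^2}$ by $L^2$-boundedness and the second contributes $O(h^\infty)\|T_\Sigma u_h\|_{L^2}$ by $h$-wavefront localization. Summing over $j$ and using $L^2$-boundedness of each $r_j(s,hD)$ yields the proposition.
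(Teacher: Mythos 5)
Your proposal follows essentially the same route as the paper's proof: both Taylor-expand $q$ about the section $\beta^0_\Sigma$ (your $\alpha$), then split the resulting remainder into an $O(\epsilon)$-piece controlled by $L^2$-boundedness and an $O_\epsilon(h^\infty)$-piece controlled by $h$-wavefront localization away from the graph; the paper carries this out directly inside the oscillatory-integral kernel (see (\ref{I1})--(\ref{I2.2})), while you abstract it to operator algebra on $\Sigma$ and isolate the estimate on $(hD_{s_j}-\alpha_j(s))T_\Sigma u_h$. One caveat: invariance of $WF_h$ does \emph{not} by itself give consistency of your two local expressions for $\alpha$ on overlaps, since containment of the wavefront set in two graphs does not force the graphs to coincide, and moreover $WF_h(T_\Sigma u_h)$ only projects onto $\Sigma\cap S^*M$, so there is no information at all off that set. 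The consistency does hold, but for a concrete reason you should cite instead: both local formulas from (\ref{wfr6}) and (\ref{wfr10}) are precisely the coordinate expressions of $\iota_\Sigma^*(\xi\,dx)$, the restriction to $\Sigma$ of the tautological one-form on $B^*_\tau M$, which is intrinsically a smooth section of $T^*\Sigma$.
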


\begin{proof}
Following the argument in Proposition \ref{WF2}, we
assume first that $\partial_{\xi_n} F (x,\xi) \neq 0$ for all $(x,\xi) \in \Sigma \cap U$ near $q_0.$ Then, by the implicit function theorem, $ \Sigma \cap U = \{ (x, \xi'); \xi_n = G(x,\xi') \}$ where $G \in C^{\infty}_{loc}.$ Consequently, one can use $\alpha_{\Sigma} := (\alpha_x, \alpha_{\xi'})$ as local coordinates on $\Sigma \cap U$ and we denote canonical dual coordinates by $\alpha_{\Sigma}^*:= (\alpha_x^*,\alpha_{\xi'}^*).$ It follows that

\begin{align} \label{mult1}
Q(h) T_{\Sigma} u_h (\alpha_{\Sigma})  \hspace{4in} \nonumber \\
= \int e^{i \Psi_{\Sigma}(\alpha_{\Sigma}, \beta_{\Sigma}, \beta_{\Sigma}^*, \beta',x)/h} \, q(\alpha_{\Sigma}, \beta_{\Sigma}^*) a (\beta_{\Sigma}, \beta',x) Tu_h(\beta') d\beta'  dx d\beta_{\Sigma} d \beta_{\Sigma}^*  + O(e^{-C/h}),
\end{align}
where the total phase
\begin{equation} \label{mult2}
\Psi_{\Sigma}(\alpha_{\Sigma}, \beta_{\Sigma},\beta_{\Sigma}^*, \beta',x) = \langle \alpha_{\Sigma} - \beta_{\Sigma}, \beta_{\Sigma}^* \rangle + \Phi( \beta_\Sigma, G(\beta_{\Sigma}); \beta',x).
\end{equation}

In view of the critical point equations in (\ref{crit}), one Taylor expands the symbol $q(\alpha_{\Sigma}, \beta_{\Sigma}^*)$ around $\beta_{x'}^* = \beta_{\xi'}, \, \beta_{x_n}^* = G(\beta_x, \beta_{\xi'})$ and $\beta_{\xi'}^* = 0.$ Setting $\beta_{\Sigma}^{0} = (  \beta_{\xi'},  G(\beta_x, \beta_{\xi'}), 0)$ and writing

$$ 
q(\alpha_{\Sigma}, \beta_{\Sigma}^*) = q(\alpha_{\Sigma}, \beta_{\Sigma}^{0}) + A ( \beta^{*}_{\Sigma} - \beta_{\Sigma}^{0}  ), \quad A = (A_1,A_2,A_3) \text{ with} \, A_j \in S^{0}(\Sigma); j=1,2,3,
$$

one writes the integral on the RHS of (\ref{mult1}) in the form

\begin{align} \label{mult3}
 \int e^{i \Psi_{\Sigma}(\alpha_{\Sigma}, \beta_{\Sigma}, \beta_{\Sigma}^*, \beta',x)/h} \, q(\alpha_{\Sigma}, \beta_{\Sigma}^{0}) \, a (\beta_{\Sigma}, \beta',x) Tu_h(\beta') d\beta'  dx d\beta_{\Sigma} d \beta_{\Sigma}^* \nonumber \\
 + \int e^{i \Psi_{\Sigma}(\alpha_{\Sigma}, \beta_{\Sigma}, \beta_{\Sigma}^*, \beta',x)/h} \, A \cdot ( \beta^{*}_{\Sigma} - \beta_{\Sigma}^{0}  )  \, a (\beta_{\Sigma}, \beta',x) Tu_h(\beta') d\beta'  dx d\beta_{\Sigma} d \beta_{\Sigma}^*=: I_1 + I_2
\end{align}

For the first term $I_1$ we make a standard Taylor expansion of the amplitude around $\alpha_{\Sigma} = \beta_{\Sigma}$ and integrate by parts with respect to $hD_{\beta_{\Sigma}^*}$ to get that

\begin{eqnarray} \label{I1}
I_1 =   \int e^{i \Psi_{\Sigma}(\alpha_{\Sigma}, \beta_{\Sigma}, \beta_{\Sigma}^*, \beta',x)/h} \, q(\alpha_{\Sigma}, \alpha_{\Sigma}^{0}) \, a (\beta_{\Sigma}, \beta',x) Tu_h(\beta') d\beta'  dx d\beta_{\Sigma} d \beta_{\Sigma}^* + O(h) \| T_{\Sigma} u \|_{L^2} \nonumber \\
=  q(\alpha_{\Sigma}, \alpha_{\Sigma}^{0}) T_{\Sigma}u +  O(h) \| T_{\Sigma} u \|_{L^2}. \hspace{2in}
 \end{eqnarray}

 To estimate $I_2$ we make a further decomposition and write
\begin{eqnarray} \label{I2}
I_2 = \int e^{i \Psi_{\Sigma}(\alpha_{\Sigma}, \beta_{\Sigma}, \beta_{\Sigma}^*, \beta',x)/h} \, \langle A, \beta^{*}_{\Sigma} - \beta_{\Sigma}^{0}  \rangle  \, \chi_{\epsilon} ( \beta^{*}_{\Sigma} - \beta_{\Sigma}^{0}  ) \, a (\beta_{\Sigma}, \beta',x) Tu_h(\beta') d\beta'  dx d\beta_{\Sigma} d \beta_{\Sigma}^* \nonumber \\
+ \int e^{i \Psi_{\Sigma}(\alpha_{\Sigma}, \beta_{\Sigma}, \beta_{\Sigma}^*, \beta',x)/h} \, \langle A,   \,  \beta^{*}_{\Sigma} - \beta_{\Sigma}^{0}  \rangle  \, \chi_{\epsilon}^+ ( \beta^{*}_{\Sigma} - \beta_{\Sigma}^{0}  ) \, a (\beta_{\Sigma}, \beta',x) Tu_h(\beta') d\beta'  dx d\beta_{\Sigma} d \beta_{\Sigma}^* .
\end{eqnarray}

Since 
 $$\langle A, \beta^{*}_{\Sigma} - \beta_{\Sigma}^{0}  \rangle  \, \chi_{\epsilon} ( \beta^{*}_{\Sigma} - \beta_{\Sigma}^{0}  ) = O(\epsilon),$$
 it follows by $L^2$-boundedness that the first term
 \begin{eqnarray} \label{I2.1}
 \Big\|  \int e^{i \Psi_{\Sigma}(\cdot, \beta_{\Sigma}, \beta_{\Sigma}^*, \beta',x)/h} \, \langle A, \beta^{*}_{\Sigma} - \beta_{\Sigma}^{0}  \rangle  \, \chi_{\epsilon} ( \beta^{*}_{\Sigma} - \beta_{\Sigma}^{0}  ) \, a (\beta_{\Sigma}, \beta',x) Tu_h(\beta') d\beta'  dx d\beta_{\Sigma} d \beta_{\Sigma}^* \Big\|_{L^2(U)} \nonumber \\
 = O(\epsilon) \|T_{\Sigma} u \|_{L^2}.\end{eqnarray}

 As for the second term in (\ref{I2}), noting that $\Im \phi(\beta,x) \geq \frac{1}{C} |\beta_x - x|^2,$ it follows by a similar integration by parts argument as in the proof of Proposition \ref{WF1} (see (\ref{wf4}) ) that 
 \begin{eqnarray} \label{I2.2}
\Big\| \int e^{i \Psi_{\Sigma}(\cdot, \beta_{\Sigma}, \beta_{\Sigma}^*, \beta',x)/h} \, \langle A,   \,  \beta^{*}_{\Sigma} - \beta_{\Sigma}^{0}  \rangle  \, \chi_{\epsilon}^+ ( \beta^{*}_{\Sigma} - \beta_{\Sigma}^{0}  ) \, a (\beta_{\Sigma}, \beta',x) Tu_h(\beta') d\beta'  dx d\beta_{\Sigma} d \beta_{\Sigma}^* \Big\|_{L^2(U)}  \nonumber \\
 = O_{\epsilon}(h^{\infty}). \hspace{1in}
 \end{eqnarray}
 Consequently, it follows from (\ref{I2.1}), (\ref{I2.2}) and (\ref{I1}) that
 \begin{equation} \label{upshot}
\|  Q(h) T_{\Sigma} u_h  - q(\alpha_{\Sigma}, \alpha_{\Sigma}^0) T_{\Sigma} u_h \|_{L^2(U)} = ( O(\epsilon) + O_{\epsilon}(h^{\infty}) ) \| T_{\Sigma} u \|_{L^2}.
\end{equation}
Since $\epsilon >0$ is arbitrary, this proves the Proposition in the first case where locally $ \Sigma \cap U = \{ (x, \xi'); \xi_n = G(x,\xi') \}.$

In the second case where $ \Sigma \cap V = \{ (x',\xi); x_n = H(x',\xi)  \},$ one argues similarily to the above but with $\beta_{\Sigma}^0 = ( \beta_{\xi'} + \beta_{\xi_n} \partial_{\beta_x'}H, \beta_{\xi_n} \partial_{\beta_{\xi}}H ).$
By constructing  a partition of unity, one can cover $\Sigma$ by finitely many sets of the form $U$ and $V$. This completes the proof.

\end{proof}

We can now turn to the proof of the 2MQER result in Theorem \ref{thm1}.

\section{2MQER: Proof of Theorem \ref{thm1}} \label{2mqer}
\begin{proof}

We first deal with the Neumann data in the LHS of (\ref{qercd}). Recall that we write $\nu=\nabla F$ for $F$ a defining function of $\Omega$, and $X=J\nu$. By the Cauchy-Riemann equation,

\begin{align*}
    h\partial_{\nu}(e^{-\rho/h}u^{\mathbb{C}}_h)&=-(\partial_{\nu}\rho)e^{-\rho/h}u^{\mathbb{C}}_h+e^{-\rho/h}h\partial_{\nu}u^{\mathbb{C}}_h\\
    &=-(\partial_{\nu}\rho)e^{-\rho/h}u^{\mathbb{C}}_h-ie^{-\rho/h}hXu^{\mathbb{C}}_h\\
    &=-(\partial_{\nu}\rho)e^{-\rho/h}u^{\mathbb{C}}_h-ihX(e^{-\rho/h}u^{\mathbb{C}}_h)+i(X\rho)e^{-\rho/h}u^{\mathbb{C}}_h
\end{align*}
Thus,

\begin{equation}\label{R}
-h\partial_{\nu}(Tu_h)=\Big(ihX+(\partial_{\nu}\rho-iX\rho)\Big)Tu_h=:RTu_h.    
\end{equation}
where $R = ihX+(\partial_{\nu}\rho-iX\rho)$,  $X \in T\Sigma$ and so, $R$ is an h-differential operator acting  {\em tangentially} along $\Sigma.$ Similarly,

\begin{equation*}
-h\partial_{\nu}(\overline{Tu_h})=\Big(-ihX+(\partial_{\nu}\rho+iX\rho)\Big)\overline{Tu_h}=\overline{RTu_h}.    
\end{equation*}

The term involving Neumann data in (\ref{qercd}) is 

\begin{equation}\label{Neumann}
\begin{split}
&\langle a~h\partial_{\nu}e^{-\rho/h}u_h^{\mathbb{C}},h\partial_{\nu}e^{-\rho/h}u_h^{\mathbb{C}}\rangle_{L^2(\Sigma)}=\int_{\Sigma}ah\partial_{\nu}(e^{-\rho/h}u^{\mathbb{C}}_h)h\partial_{\nu}(e^{-\rho/h}\overline{u^{\mathbb{C}}_h})d\sigma\\
&=\int_{\Sigma}aRTu_h\overline{RTu_h}d\sigma
=\int_{\Sigma}R^*aRTu_h\overline{Tu_h}d\sigma
=\langle R^*aRT_{\Sigma}u_h,T_{\Sigma}u_h\rangle_{L^2(\Sigma)},
\end{split}
\end{equation}

where $d \sigma$ denotes the Riemannian hypersurface measure on $\Sigma.$

A straightforward computation together with  Lemma \ref{WF2} (in particular, the compactness of  $WF_h (T_{\Sigma} u)$) implies that

\begin{equation} \label{conjugation}
R^*aR=-ah^2X^2+2ia(\partial_{\nu}\rho)hX+a(\partial_{\nu}\rho)^2+a(X\rho)^2+O_{L^2(\Sigma)\to L^2(\Sigma)}(h).
\end{equation}

Next, we deal with the first-order term $\langle 2ah \nabla\rho(e^{-\rho/h}u_h^{\mathbb{C}}),e^{-\rho/h}u_h^{\mathbb{C}}\rangle_{L^2(\Sigma)}$ in (\ref{qercd}) using the same method. The position of $\Sigma$,  relative to the level sets of $\rho$ and the ambient complex structure is of importance here. We define the function $\theta=\theta(p)$ on $\Sigma$ to be the angle of intersection between $\Sigma$ and $\{z\in M_{\tau}^{\mathbb{C}}|\rho(z)=\rho(p)\}$ at the point $p\in \Sigma$. Similarly,  we let $\phi=\phi(p)$ be the angle between the normal vector $\nabla\rho$ and $J\nu$, where $J$ is the almost-complex structure of the Grauert tube.

\begin{remark} \label{plane}
We note that $\nu_p$ and $X = J \nu_p$ span a real 2-plane in $T_p M$ at any point $p \in \Sigma$. However, $(\nabla \rho)_p$ does not necessarily lie in span$(\nu_p , X_p)$ and so, $\phi$ and $\theta$ defined above are independent variables. However, the range of $(\theta,\phi)\in [0,\pi]^2$ is contained in a diamond-shaped region (see Figure 1 and thereafter).
\end{remark} 

We thus have, by definition

\begin{equation}\label{theta,phi}
\langle\nabla\rho,\nu\rangle=|\nabla\rho|\cos\theta,\quad \langle \nabla\rho,J\nu\rangle=|\nabla \rho|\cos\phi.
\end{equation}

Decompose the vector field $(\nabla\rho)|_{\Sigma}$ into two parts: $(\nabla\rho)^{T}$ tangential to $\Sigma$ and $(\nabla\rho)^{\nu}$ normal to $\Sigma$. The tangential component of $\nabla \rho$ to $\Sigma$ at $p \in \Sigma$ then has length

\begin{equation} \label{tangential}
|(\nabla \rho)^{T} |= |\nabla \rho| \, \sin \theta.
\end{equation}
We compute
\begin{align}\label{1st}
    &\langle 2ah\nabla\rho(e^{\rho/h}u_h^{\mathbb{C}}),e^{-\rho/h}u_h^{\mathbb{C}}\rangle_{L^2(\Sigma)}\nonumber\\
   =&\langle 2ah\nabla\rho(e^{-\rho/h})u_h^{\mathbb{C}},e^{-\rho/h}u_h^{\mathbb{C}} \rangle+\langle 2ahe^{-\rho/h}(\nabla \rho)u_h^{\mathbb{C}},e^{-\rho/h}u_h^{\mathbb{C}}\rangle\nonumber\\
   =&\langle -2a|\nabla\rho|^2e^{-\rho/h}u_h^{\mathbb{C}},e^{-\rho/h}u_h^{\mathbb{C}}\rangle+\langle 2ahe^{-\rho/h}[(\nabla\rho)^{T}+(\nabla\rho)^{\nu}]u_h^{\mathbb{C}},e^{-\rho/h}u_h^{\mathbb{C}}\rangle\\
   =&\langle -2a|\nabla\rho|^2e^{-\rho/h}u_h^{\mathbb{C}},e^{-\rho/h}u_h^{\mathbb{C}}\rangle+\langle 2ahe^{-\rho/h}\underbrace{(-\langle\nabla\rho,\nu\rangle iX+(\nabla\rho)^{T})}_{=:Y}u_h^{\mathbb{C}},e^{-\rho/h}u_h^{\mathbb{C}}\rangle\nonumber\\
   =&\big\langle a\Big(-2|\nabla\rho|^2+2hY-i|\nabla\rho|^2\cos\theta\cos\phi+|\nabla\rho|^2\sin\theta\Big)e^{-\rho/h}u_h^{\mathbb{C}},e^{-\rho/h}u_h^{\mathbb{C}}\big\rangle_{L^2(\Sigma)}\nonumber
\end{align}
where we used Cauchy-Riemann equation (\ref{CReqs}) in the derivation of $Y$. The last equality follows from the fact that

\begin{align}\label{Ye-eY}
    &[Y,  e^{-\rho/h}] = Y(e^{-\rho/h})\nonumber\\
    =&-\langle\nabla\rho,\nu\rangle iXe^{-\rho/h}+(\nabla\rho)^{T}e^{-\rho/h}\nonumber\\
    =&\frac{1}{h}\langle\nabla\rho,\nu\rangle ie^{-\rho/h}\underbrace{(X\rho)}_{=\langle \nabla\rho,J\nu\rangle}-\frac{1}{h} e^{-\rho/h}\underbrace{((\nabla\rho)^{T}\rho)}_{=\langle (\nabla\rho)^{T},\nabla\rho\rangle}\\
    =&\frac{1}{h}ie^{-\rho/h}|\nabla\rho|\cos\theta |\nabla\rho|\cos\phi-\frac{1}{h}e^{-\rho/h}|\nabla\rho|^2\sin\theta.\nonumber
\end{align}

From  (\ref{conjugation}) it also follows that 

\begin{equation}\label{2nd}
R^*aR=-ah^2X^2+2ia|\nabla\rho|\cos\theta \, hX+a|\nabla\rho|^2\cos^2\theta+a|\nabla\rho|^2\cos^2\phi + O_{L^2(\Sigma) \to L^2(\Sigma)}(h).  \end{equation}
To get the left-hand side of (\ref{qercd}), we sum up all the terms in (\ref{1st}), (\ref{2nd}) and add  \newline $\langle a h^2\Delta_{\Sigma}(e^{-\rho/h}u_h^{\C}),e^{-\rho/h}u_h^{\C}\rangle$ to get that

\begin{eqnarray}\label{explicit}
  Q_a(h) = a\Big(h^2\Delta_{\Sigma}-h^2X^2+2h(\nabla\rho)^{T}+|\nabla\rho|^2(
    \cos^2\theta-2+\cos^2\phi+\sin\theta -i\cos\theta\cos\phi
    )\Big) \\ \nonumber 
    +  O_{L^2(\Sigma) \to L^2(\Sigma)}(h). 
\end{eqnarray}

We have reduced the left-hand side of (\ref{qercd}) into a tangential $h\Psi$DO acting on $T_{\Sigma}u_h$. Finally, in view of the wavefront localization in Proposition  \ref{WF2},

$$
\langle Q_a(h) T_\Sigma u, T_\Sigma u \rangle_{L^2(\Sigma)} = \langle \chi_\Sigma(h)^* Q_a(h) \chi_{\Sigma}(h) T_\Sigma u, T_\Sigma u \rangle_{L^2(\Sigma)} + O(h^\infty)
$$
where $\chi_{\Sigma} \in C^{\infty}_0$ equals $1$ near the compact set ${\mathcal W}_{\Sigma}$ in Proposition \ref{WF2}. The theorem then follows with

\begin{equation} \label{test op}
{\mathcal P}_{\Sigma, a}(h) =  \chi_\Sigma(h)^* Q_a(h) \chi_{\Sigma}(h) \in \Psi_h^0(\Sigma).
\end{equation}

\end{proof}

\section{ $L^2$-restriction bounds for $\| T_{\Sigma} u_h \|_{L^2(\Sigma)}$: proof of Theorem \ref{bounds}}\label{UL}

We first prove the crude upper bound (\ref{crude upper bound}) of order $h^{-1/2}$. The proof is essentially a Sobolev restriction argument.\

\begin{lemma}\label{lemma of crude}
    Under the assumptions of Theorem 2, we have $\|T_{\Sigma}u_h\|_{L^2(\Sigma)}\leq C_{\Sigma}h^{-1/2}$. 
\end{lemma}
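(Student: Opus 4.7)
The plan is to combine the compactness of $WF_h(Tu_h)$ from Proposition \ref{WF1} with a semiclassical Sobolev trace inequality. Because this is the classical Sobolev $H^1 \to H^{1/2}$ trace theorem dressed up in semiclassical language, no deep ingredient is needed beyond what has already been developed.

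First I will use the cutoff $\chi_{\mathcal{W}} \in C_0^\infty(T^*B_\tau^*M)$ introduced after \eqref{W}, which equals $1$ on a fixed neighborhood of $\mathcal{W}$ and is compactly supported in all four variables $(x, \xi, x^*, \xi^*)$. By the wavefront set localization in Proposition \ref{WF1},
\begin{equation*}
Tu_h = Op_h(\chi_{\mathcal{W}}) Tu_h + O_{C^\infty}(h^\infty).
\end{equation*}
The compact support of $\chi_{\mathcal{W}}$ in the fiber coordinates $(x^*, \xi^*)$ means that $h\partial_j \circ Op_h(\chi_{\mathcal{W}}) \in \Psi_h^0(B_\tau^*M)$, so it is $L^2$-bounded. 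Consequently, for each coordinate derivative $\partial_j$,
\begin{equation*}
\| h \partial_j Op_h(\chi_{\mathcal{W}}) Tu_h\|_{L^2(B_\tau^*M)} = O(\|Tu_h\|_{L^2}) = O(1).
\end{equation*}

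Next I will apply the elementary trace identity. Cover $\Sigma$ by finitely many Fermi charts $(\beta', \beta)$ as in Section \ref{FERMI}, with $\Sigma \cap U_{\Sigma} = \{\beta = 0\}$, and let $\{\psi_k\}$ be a subordinate partition of unity on $\Sigma$. For any smooth $f$ compactly supported in a chart, the fundamental theorem of calculus gives $|f(\beta',0)|^2 = -\int_0^\infty \partial_\beta |f(\beta',\beta)|^2 \, d\beta$, and Cauchy--Schwarz then yields
\begin{equation*}
\|f|_{\beta=0}\|_{L^2}^2 \leq 2 \|f\|_{L^2} \|\partial_\beta f\|_{L^2} = 2 h^{-1} \|f\|_{L^2} \|h \partial_\beta f\|_{L^2}.
\end{equation*}
Applying this to $f = \psi_k \cdot Op_h(\chi_{\mathcal{W}}) Tu_h$ in each chart (absorbing the cutoff $\psi_k$ into the symbol without affecting $L^2$-bounds), summing over $k$, and using the two $O(1)$ bounds above produces
\begin{equation*}
\|T_\Sigma u_h\|_{L^2(\Sigma)}^2 \leq C h^{-1} + O(h^\infty) = O(h^{-1}),
\end{equation*}
which is the claimed estimate.

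I do not expect a genuine obstacle here. The only point that requires minor verification is that the compact support of $\chi_{\mathcal{W}}$ in the fiber directions is enough to bound semiclassical first derivatives of $Op_h(\chi_{\mathcal{W}}) Tu_h$ in $L^2$; this is immediate from the standard $L^2$-continuity of $\Psi_h^0$, so the argument is essentially a one-step reduction to the classical trace theorem.
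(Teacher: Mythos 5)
Your proposal is correct and rests on the same two pillars as the paper's own proof: the wavefront compactness of Proposition \ref{WF1} (which makes $h\partial_j\,Op_h(\chi_{\mathcal W})\,Tu_h$ an $O(1)$ object in $L^2$) and a trace inequality that trades one semiclassical derivative for a factor of $h^{-1/2}$. The difference is purely in how the trace step is packaged. The paper invokes the Sobolev restriction bound $\|T_\Sigma u\|_{L^2(\Sigma)} \le C\|Tu\|_{H^{1/2}(\Omega)}$ with $\Sigma=\partial\Omega$, then reinterprets the $H^{1/2}$ norm as $Ch^{-1}\langle Op_h((|x^*|^2+|\xi^*|^2+h)^{1/2})Tu,Tu\rangle$ after rescaling fibers by $h^{-1}$, and uses $L^2$-boundedness of the localized $\Psi_h^0$ operator. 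You instead derive the trace bound directly from the fundamental theorem of calculus in a Fermi normal variable, $\|f|_{\beta=0}\|_{L^2}^2 \le 2h^{-1}\|f\|_{L^2}\|h\partial_\beta f\|_{L^2}$, applied to the localized quantity $\psi_k\,Op_h(\chi_{\mathcal W})Tu_h$. This is slightly more elementary and, if anything, more careful: the $H^{1/2}(\Omega)\to L^2(\partial\Omega)$ trace is the borderline case of the Sobolev trace theorem, whereas your FTC argument makes the needed $\sqrt{L^2\cdot H^1}$ interpolation explicit and unambiguous. The one technicality you gloss over is that $\psi_k$ is a partition of unity on $\Sigma$, so you should also insert a cutoff in the normal variable $\beta$ (or use the compactness of $\overline{\Omega}$) so that $f$ genuinely vanishes for large $\beta$ before applying the FTC; the commutator with this extra cutoff only adds another $O(1)$ term and does not affect the bound.
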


\begin{proof}
    
To prove the upper bounds, we note that by Sobolev restriction, if $\Sigma = \partial \Omega$ with $\Omega \subset B_\tau^*,$ 

\begin{equation} \label{upper1}
\| T_{\Sigma} u \|_{L^2(\Sigma)} \leq C_1 \| T u \|_{H^{\frac{1}{2}}(\Omega)}.
\end{equation}

Since 

$$ \| T u \|_{H^{\frac{1}{2}}(\Omega)}^2 \leq C  \big| \langle Op_1 ( \langle (x^*,\xi^*) \rangle) T u, Tu \rangle_{L^2(\Omega)} \big|$$
after rescaling $(x^*,\xi^*) \to h^{-1} (x^*,\xi^*)$ in the fiber variables, it follows that

\begin{align} \label{upper2}
 \| T u \|_{H^{\frac{1}{2}}(\Omega)}^2 
 &\leq C h^{-1} \langle Op_h ( ( |x^*|^2  + |\eta^*|^2 + h )^{1/2} ) T u, Tu \rangle_{L^2(\Omega)} \nonumber \\
  &= C h^{-1} \langle \chi_{{\mathcal W}}^*  ( ( |x^*|^2 + |\xi^*|^2 + h )^{1/2} ) \chi_{{\mathcal W}} T u, Tu \rangle_{L^2(\Omega)} + O(h^{\infty}),
 \end{align}
where ${\mathcal W} \subset T^*B_{\tau}^*M$ is the compact set in Proposition \ref{WF} and $\chi_{{\mathcal W}} \in C^{\infty}_0(T^*B_{\tau}^*)$ equals 1 in a neighbourhood of ${\mathcal W}.$
    
Since $\chi_{{\mathcal W}}^*  ( ( |x^*|^2 + |\xi^*|^2 + h )^{1/2} ) \chi_{{\mathcal W}} \in \Psi_h^0(B_\tau^*M)$, it follows by $L^2$-boundedness in (\ref{upper2}) that
    
$$
\| T u \|_{H^{\frac{1}{2}}(\Omega)}^2 \leq C' h^{-1}
$$
and  in view of (\ref{upper1}) we are done.

\end{proof}

Next, we make some preparations for the uniform upper bound (\ref{comparable}). We will need the following elementary result:

\begin{lemma}\label{sum of squares}
Let $(M,g)$ be a Riemannian manifold, and let $X$ be a vector field with $|X|_g=1$. Then $\sigma(h^2\Delta-h^2X^2)\leq 0$.
\end{lemma}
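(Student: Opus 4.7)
The plan is to compute the principal symbol of $h^2\Delta - h^2 X^2$ directly in local coordinates and observe that the resulting quadratic form in $\xi$ is precisely a Cauchy--Schwarz defect. Writing $X = X^i(x)\partial_{x_i}$ with $g_{ij}X^iX^j = 1$, the Laplace--Beltrami operator has principal symbol $\sigma(h^2\Delta)(x,\xi) = -g^{ij}(x)\xi_i\xi_j = -|\xi|_g^2$. The first-order operator $hX$ has symbol $iX^i(x)\xi_i$, so multiplicativity of principal symbols under composition gives $\sigma(h^2X^2) = -(X^i(x)\xi_i)^2$. Subtracting yields
\[
\sigma(h^2\Delta - h^2 X^2)(x,\xi) = (X^i(x)\xi_i)^2 - g^{ij}(x)\xi_i\xi_j.
\]

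Next I would identify $\xi \in T^*_xM$ with its metric-dual vector $\xi^\sharp \in T_xM$, so that $X^i\xi_i = g_x(X,\xi^\sharp)$ and $g^{ij}\xi_i\xi_j = g_x(\xi^\sharp,\xi^\sharp) = |\xi|_g^2$. Pointwise Cauchy--Schwarz in the inner product space $(T_xM, g_x)$ then gives
\[
(X^i\xi_i)^2 = g_x(X,\xi^\sharp)^2 \le g_x(X,X)\,g_x(\xi^\sharp,\xi^\sharp) = |X|_g^2\,|\xi|_g^2 = |\xi|_g^2,
\]
where the final equality uses the hypothesis $|X|_g = 1$. Substituting into the formula for the symbol gives $\sigma(h^2\Delta - h^2X^2)(x,\xi) \le 0$ at every point, as required.

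There is no real obstacle here; the only bookkeeping is in the sign conventions. Concretely, $\Delta$ is the non-positive Laplace--Beltrami operator so $\sigma(h^2\Delta) = -|\xi|_g^2$, and the symbol of $hX$ is purely imaginary, so its square contributes an overall minus sign to $\sigma(h^2 X^2)$. Once these are tracked consistently, the lemma reduces to the tautological Cauchy--Schwarz inequality. (Equivalently, one could extend $X = X_1$ to a local $g$-orthonormal frame $X_1, \dots, X_n$ and observe that in the dual coframe $\theta^1,\dots,\theta^n$, writing $\xi = \sum \eta_i \theta^i$, the symbol becomes $\eta_1^2 - \sum_{i=1}^n \eta_i^2 = -\sum_{i\ge 2}\eta_i^2 \le 0$, which makes the ``sum of squares'' character transparent.)
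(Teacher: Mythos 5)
Your proof is correct and follows essentially the same route as the paper: compute $\sigma(-h^2X^2) = (\text{pairing of } \xi \text{ with } X)^2$, compare it to $\sigma(-h^2\Delta) = |\xi|_g^2$, and close with Cauchy--Schwarz via the metric isomorphism (your $\xi^\sharp$ is the paper's $\alpha^\#$) together with $|X|_g = 1$. The orthonormal-frame remark at the end is a nice transparent reformulation but not a different argument.
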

\begin{proof}
    In terms of local coordinates, we let  $X=\sum_{j=1}^nc_j(x)\partial_{x_j}$. Consequently, $-h^2X^2\\=-\sum_{j,k}c_jc_kh^2\partial_{x_j}\partial_{x_k}+O_{L^2\to L^2}(h) =-(\sum_jc_jh\partial_{x_j})^2+ O_{L^2\to L^2}(h)$. As a result, 
    
    $$
    \sigma(-h^2X^2)=(\sum_{j=1}^nc_j\xi_j)^2=(\alpha(X))^2,
    $$
    where $\alpha=\xi dx$ is the canonical one-form in $T^*M$.   Then, the inequality in the statement of the Lemma is simply that $\alpha(X)^2\leq |\xi|^2_g$. That is exactly the Cauchy-Schwarz inequality for pairing between a 1-form and a vector field, namely

    $$
    |\alpha(X)|=|\langle \alpha^{\#},X\rangle_g|\leq |\alpha^{\#}|_g|X|_g=|\xi|_g,
    $$
    since $|X|_g=1$, and $|\alpha^\#|_g=|\xi|_g$.
    
\end{proof}

The next proposition is crucial in the proof of the uniform upper bound in (\ref{comparable}). 

\begin{proposition}
Let $p_{\Sigma}: T^*\Sigma \to \Sigma$ be the canonical projection and assume that for any $z\in p_{\Sigma}\mathcal{W}_{\Sigma}\subset \Sigma$ 

\begin{equation}\label{local condition}
    \exists\delta=\delta(z)>0,\quad s.t.~|(\theta,\phi)-(\pi/2,0)|>\delta,\quad|(\theta,\phi)-(\pi/2,\pi)|>\delta,
\end{equation} 

then there exist constants $h_0>0$ and  $C_{\Sigma}>0$ such that for all $h \in (0,h_0],$ 
$$|\langle Q_1(h)T_{\Sigma}u,T_{\Sigma}u\rangle|\geq C_{\Sigma}\|T_{\Sigma}u\|^2.$$
\end{proposition}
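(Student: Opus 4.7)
The plan is to show that the real part of the form $\langle Q_1(h)T_\Sigma u,T_\Sigma u\rangle$ is negative and bounded away from $0$ in modulus by $C_\Sigma \|T_\Sigma u\|^2$, which then dominates the absolute value. The key algebraic observation concerns the complex zeroth-order function
$$m(\theta,\phi)=|\nabla\rho|^2\bigl(\cos^2\theta+\cos^2\phi+\sin\theta-2-i\cos\theta\cos\phi\bigr)$$
appearing in (\ref{explicit}) with $a=1$. Its real part is $\Re m=|\nabla\rho|^2(\cos^2\theta+\cos^2\phi+\sin\theta-2)$. Using the geometric constraint $\cos^2\theta+\cos^2\phi\leq 1$ (the diamond region from Remark \ref{plane}, coming from $\nu,J\nu$ being orthonormal) together with $\sin\theta\leq 1$, one obtains $\Re m\leq |\nabla\rho|^2(\sin\theta-1)\leq 0$, with equality iff $\sin\theta=1$ and $\cos^2\theta+\cos^2\phi=1$, i.e., iff $(\theta,\phi)\in\{(\pi/2,0),(\pi/2,\pi)\}$. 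Combined with the hypothesis (\ref{local condition}) and the compactness of $p_\Sigma\mathcal{W}_\Sigma\subset \Sigma\cap S^*M$ (from Proposition \ref{WF2}), this extracts a uniform $c_0>0$ with $\Re m(z)\leq -c_0$ for every $z\in p_\Sigma\mathcal{W}_\Sigma$; note that on this set $|\nabla\rho|=1$ by (\ref{gradnorm}).

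Next, I would decompose the operator (\ref{explicit}) as $Q_1(h)=A(h)+B(h)+m+O_{L^2\to L^2}(h)$, where $A(h)=h^2\Delta_\Sigma-h^2X^2$, $B(h)=2h(\nabla\rho)^T$, and $m$ is multiplication by the complex function above, and then control the real part of each piece on $T_\Sigma u$. Lemma \ref{sum of squares} applied with the induced metric on $\Sigma$ (noting that $X=J\nu$ is tangent to $\Sigma$ with unit length) gives $\sigma_2(-A)\geq 0$, so after microlocalizing to a neighborhood of the compact set $\mathcal{W}_\Sigma$ from Proposition \ref{WF2} and applying Gårding's inequality we obtain
$$\langle A(h)T_\Sigma u,T_\Sigma u\rangle_{L^2(\Sigma)}\leq O(h)\,\|T_\Sigma u\|_{L^2(\Sigma)}^2.$$
For the real vector field $B(h)=2h(\nabla\rho)^T$, the symmetric part $\tfrac{1}{2}(B+B^*)$ reduces to multiplication by $-h\cdot\mathrm{div}_\Sigma(\nabla\rho)^T$, and is hence $O(h)$, so $\Re\langle B(h)T_\Sigma u,T_\Sigma u\rangle=O(h)\|T_\Sigma u\|^2$. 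For the multiplication $m$, pick $\chi_0\in C^\infty_0(\Sigma)$ equal to $1$ near $p_\Sigma\mathcal{W}_\Sigma$ and supported where $\Re m\leq -c_0/2$; by the singular-support localization (\ref{singsupp}), $(1-\chi_0)T_\Sigma u=O_{L^2}(h^\infty)$, and therefore
$$\int_\Sigma \Re m \,|T_\Sigma u|^2\, d\sigma\leq -\frac{c_0}{2}\|T_\Sigma u\|_{L^2(\Sigma)}^2+O(h^\infty).$$

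Summing these three contributions and the $O(h)$ remainder from (\ref{explicit}) gives
$$\Re\langle Q_1(h)T_\Sigma u,T_\Sigma u\rangle_{L^2(\Sigma)}\leq -\frac{c_0}{2}\|T_\Sigma u\|_{L^2(\Sigma)}^2+O(h)\|T_\Sigma u\|_{L^2(\Sigma)}^2,$$
so that for all $h\in(0,h_0]$ with $h_0$ small enough,
$$|\langle Q_1(h)T_\Sigma u,T_\Sigma u\rangle|\geq |\Re\langle Q_1(h)T_\Sigma u,T_\Sigma u\rangle|\geq \frac{c_0}{4}\|T_\Sigma u\|_{L^2(\Sigma)}^2,$$
which is the required estimate with $C_\Sigma=c_0/4$. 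The principal obstacle is the algebraic identification in the first paragraph: recognizing that the only zeros of $\Re m$ inside the admissible diamond region are exactly the two excluded pairs in (\ref{local condition}). Once this structural fact is in hand, Gårding, standard $L^2$-boundedness, and the wavefront and singular-support localizations from Propositions \ref{WF1} and \ref{WF2} combine in a routine way to complete the argument.
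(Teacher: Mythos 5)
Your proof is correct and follows the same overall strategy as the paper (pass to the real part, show the symbol is bounded away from zero on $\mathcal{W}_\Sigma$, then apply G\aa rding's inequality together with the wavefront localization of Proposition~\ref{WF2}), but two details deserve mention. First, you split $Q_1(h)$ into a nonpositive second-order piece, a first-order antisymmetric piece, and a strictly negative multiplication piece $m$, handling each separately; the paper instead absorbs the zeroth-order real term into a single operator $A(h)$, extends its symbol globally via Urysohn to a strictly negative symbol $\tilde\alpha_0$, and applies G\aa rding once. The two bookkeepings are equivalent, though yours avoids the Urysohn extension step. Second, and more substantively, you replace the paper's graphical argument (Figure~\ref{Fig1}) for identifying the zero set of $f(\theta,\phi)=\cos^2\theta+\cos^2\phi+\sin\theta-2$ in the admissible region with a clean algebraic one: $\nu$ and $J\nu$ are orthonormal, so $\cos^2\theta+\cos^2\phi$ is the squared norm of the orthogonal projection of the unit vector $\nabla\rho/|\nabla\rho|$ onto $\mathrm{span}(\nu,J\nu)$ and hence $\leq 1$; combined with $\sin\theta\leq 1$ this gives $f\leq 0$ with equality precisely at $(\pi/2,0)$ and $(\pi/2,\pi)$. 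This recovers the diamond constraint~(\ref{diamond}) without appealing to a picture and makes the argument self-contained, which is an improvement over the paper's presentation.
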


\begin{proof}

From (\ref{explicit}), one can write 
   
$$Q_1(h)=A(h)+iB(h),$$
   
where
    
\begin{align}\label{A(h)}
    A(h)&=h^2\Delta_{\Sigma}-h^2X^2+|\nabla\rho|^2(\cos^2\theta-2+\cos^2\phi+\sin\theta),\\
    B(h)&=-2ih(\nabla\rho)^{T}-\cos\theta\cos\phi.
\end{align}

 Since the principal symbols $\sigma(A(h))$ and $\sigma(B(h))$ are both real-valued, it follows that
  $$ A(h)^* T_{\Sigma}u =A(h) T_{\Sigma} u + O(h) \| T_{\Sigma}u \|_{L^2}, \quad B(h)^* T_{\Sigma}u =B(h) T_{\Sigma} u + O(h) \| T_{\Sigma}u \|_{L^2},$$ 
  and so,
  $$|\Im\langle A(h)T_{\Sigma}u,T_{\Sigma}u\rangle_{L^2(\Sigma)}| = O(h)\|T_{\Sigma}u\|^2_{L^2(\Sigma)},  $$
  $$
  |\Im\langle B(h)T_{\Sigma}u,T_{\Sigma}u\rangle_{L^2(\Sigma)}| = O(h)\|T_{\Sigma}u\|^2_{L^2(\Sigma)}.  $$
  
    \medskip
    
    Since $ \langle Q_1(h) T_\Sigma u, T_{\Sigma} u \rangle = \langle A(h) T_{\Sigma} u, T_{\Sigma} u \rangle + i \langle B(h) T_{\Sigma} u, T_{\Sigma} u \rangle$, we have that
    
    \begin{equation} \label{garding1}
    \Re \langle Q_1(h) T_{\Sigma} u, T_{\Sigma} u \rangle =  \langle A(h) T_{\Sigma} u, T_{\Sigma} u \rangle + O(h) \|T_{\Sigma} u \|^2.
    \end{equation}

   In view of (\ref{garding1}), one reduced to proving the $h$-ellipticity for $A(h)$ under the conditions in (\ref{local condition}). Evidently, the principal symbol of $A(h)$ is given by
   
   $$\sigma(A) = \sigma(h^2\Delta_{\Sigma}-h^2X^2) + \Re V,$$
   
   where 
   $$ \Re V = |\nabla \rho|^2 f,$$
   with $f(\theta, \phi) = \cos^2\theta-2+\cos^2\phi+\sin\theta.$

  By Lemma \ref{sum of squares}, $\sigma(h^2\Delta_{\Sigma}-h^2X^2)\leq 0$ and  $|\nabla\rho|^2>0$ since we localize near $S^*M$.  To ensure that $\sigma(A) <0,$ it therefore suffices to determine the values of $(\theta,\phi)$ for which $f(\theta,\phi)<0.$

   The range of $(\theta,\phi)$ where $f\geq 0$ is the interior of periodically repeated figure-$\infty$ regions, see Figure \ref{Fig1}. 

    \begin{figure}[ht]
    \centering
    \includegraphics[width=0.6\textwidth]{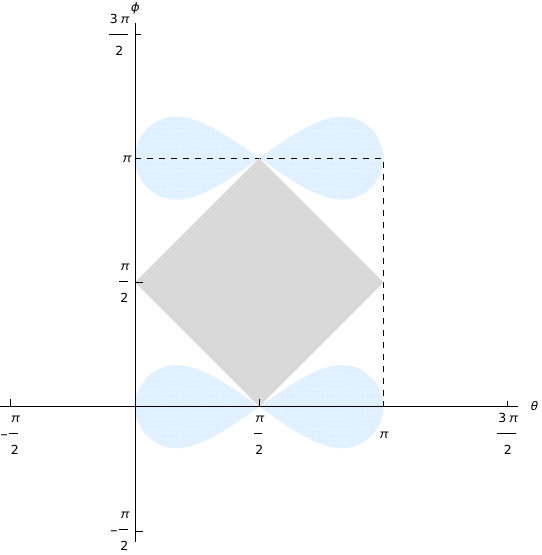}
    \caption{Blue: $\{f\geq 0\}$. Gray: Admissible region of $(\theta,\phi)$.}
    \label{Fig1}
    \end{figure}

On the other hand, we know apriori that 

\begin{align}\label{diamond}
\phi\in[\frac{\pi}{2}-\theta,\frac{\pi}{2}+\theta],\quad&\text{for }\theta\in [0,\frac{\pi}{2}],\nonumber\\
\phi\in[-\frac{\pi}{2}+\theta,\frac{3\pi}{2}-\theta],\quad&\text{for }\theta\in [\frac{\pi}{2},\pi],
\end{align}
which gives the diamond shape in Figure 1. We refer to this as the {\em admissible} region.


\begin{figure}[ht]
\centering{
\resizebox{140mm}{!}
{
\begingroup%
  \makeatletter%
  \providecommand\color[2][]{%
    \errmessage{(Inkscape) Color is used for the text in Inkscape, but the package 'color.sty' is not loaded}%
    \renewcommand\color[2][]{}%
  }%
  \providecommand\transparent[1]{%
    \errmessage{(Inkscape) Transparency is used (non-zero) for the text in Inkscape, but the package 'transparent.sty' is not loaded}%
    \renewcommand\transparent[1]{}%
  }%
  \providecommand\rotatebox[2]{#2}%
  \newcommand*\fsize{\dimexpr\f@size pt\relax}%
  \newcommand*\lineheight[1]{\fontsize{\fsize}{#1\fsize}\selectfont}%
  \ifx\svgwidth\undefined%
    \setlength{\unitlength}{260.27971643bp}%
    \ifx\svgscale\undefined%
      \relax%
    \else%
      \setlength{\unitlength}{\unitlength * \real{\svgscale}}%
    \fi%
  \else%
    \setlength{\unitlength}{\svgwidth}%
  \fi%
  \global\let\svgwidth\undefined%
  \global\let\svgscale\undefined%
  \makeatother%
  \begin{picture}(1,0.48308455)%
    \lineheight{1}%
    \setlength\tabcolsep{0pt}%
    \put(0,0){\includegraphics[width=\unitlength,page=1]{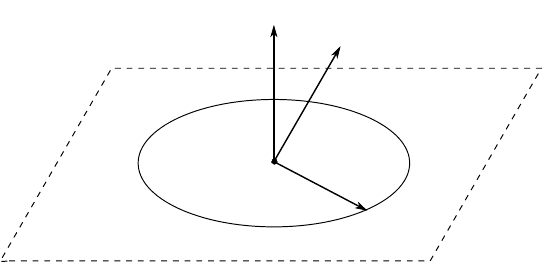}}%
    \put(0.45896554,0.45351906){\makebox(0,0)[lt]{\lineheight{1.25}\smash{\begin{tabular}[t]{l}$\nu$\end{tabular}}}}%
    \put(0.60861271,0.42414909){\makebox(0,0)[lt]{\lineheight{1.25}\smash{\begin{tabular}[t]{l}$\nabla\rho$\end{tabular}}}}%
    \put(0.67014979,0.06611484){\makebox(0,0)[lt]{\lineheight{1.25}\smash{\begin{tabular}[t]{l}$J\nu$\end{tabular}}}}%
    \put(0,0){\includegraphics[width=\unitlength,page=2]{drawing.pdf}}%
    \put(0.51211137,0.25002694){\makebox(0,0)[lt]{\lineheight{1.25}\smash{\begin{tabular}[t]{l}$\theta$\end{tabular}}}}%
    \put(0.53518753,0.20037766){\makebox(0,0)[lt]{\lineheight{1.25}\smash{\begin{tabular}[t]{l}$\phi$\end{tabular}}}}%
    \put(0.01841561,0.28708909){\makebox(0,0)[lt]{\lineheight{1.25}\smash{\begin{tabular}[t]{l}$T_p\Sigma$\end{tabular}}}}%
    \put(0.14288844,0.07450626){\makebox(0,0)[lt]{\lineheight{1.25}\smash{\begin{tabular}[t]{l}$S^{2n-2}$\end{tabular}}}}%
  \end{picture}%
\endgroup%
{
\begingroup%
  \makeatletter%
  \providecommand\color[2][]{%
    \errmessage{(Inkscape) Color is used for the text in Inkscape, but the package 'color.sty' is not loaded}%
    \renewcommand\color[2][]{}%
  }%
  \providecommand\transparent[1]{%
    \errmessage{(Inkscape) Transparency is used (non-zero) for the text in Inkscape, but the package 'transparent.sty' is not loaded}%
    \renewcommand\transparent[1]{}%
  }%
  \providecommand\rotatebox[2]{#2}%
  \newcommand*\fsize{\dimexpr\f@size pt\relax}%
  \newcommand*\lineheight[1]{\fontsize{\fsize}{#1\fsize}\selectfont}%
  \ifx\svgwidth\undefined%
    \setlength{\unitlength}{260.27971643bp}%
    \ifx\svgscale\undefined%
      \relax%
    \else%
      \setlength{\unitlength}{\unitlength * \real{\svgscale}}%
    \fi%
  \else%
    \setlength{\unitlength}{\svgwidth}%
  \fi%
  \global\let\svgwidth\undefined%
  \global\let\svgscale\undefined%
  \makeatother%
  \begin{picture}(1,0.59157057)%
    \lineheight{1}%
    \setlength\tabcolsep{0pt}%
    \put(0,0){\includegraphics[width=\unitlength,page=1]{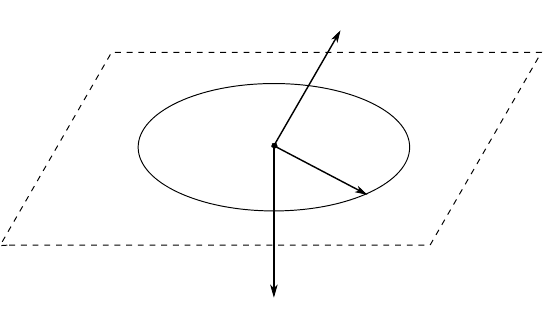}}%
    \put(0.4561684,0.00677229){\makebox(0,0)[lt]{\lineheight{1.25}\smash{\begin{tabular}[t]{l}$\nu$\end{tabular}}}}%
    \put(0.60861271,0.56200509){\makebox(0,0)[lt]{\lineheight{1.25}\smash{\begin{tabular}[t]{l}$\nabla\rho$\end{tabular}}}}%
    \put(0.67014979,0.20397085){\makebox(0,0)[lt]{\lineheight{1.25}\smash{\begin{tabular}[t]{l}$J\nu$\end{tabular}}}}%
    \put(0,0){\includegraphics[width=\unitlength,page=2]{drawing2.pdf}}%
    \put(0.53099182,0.33263938){\makebox(0,0)[lt]{\lineheight{1.25}\smash{\begin{tabular}[t]{l}$\phi$\end{tabular}}}}%
    \put(0.01841561,0.42494509){\makebox(0,0)[lt]{\lineheight{1.25}\smash{\begin{tabular}[t]{l}$T_p\Sigma$\end{tabular}}}}%
    \put(0.14288844,0.21236227){\makebox(0,0)[lt]{\lineheight{1.25}\smash{\begin{tabular}[t]{l}$S^{2n-2}$\end{tabular}}}}%
    \put(0,0){\includegraphics[width=\unitlength,page=3]{drawing2.pdf}}%
    \put(0.52469842,0.22215234){\makebox(0,0)[lt]{\lineheight{1.25}\smash{\begin{tabular}[t]{l}$\theta$\end{tabular}}}}%
  \end{picture}%
\endgroup%
}}
\caption{$\phi$ attains its max and min when $J\nu\in \text{span} (\nu,\nabla\rho)$.}
\label{Fig2}
}
\end{figure}

    Figure 2 shows that if we fix $\nu_p$ and $(\nabla\rho)_p$, then $J\nu_p$ lies in the unit sphere within $T_p\Sigma$. It follows from simple geometry that (\ref{diamond}) holds.

    To prove ellipticity, we note  that in view of  (\ref{local condition}) and (\ref{singsupp}), we have $f(\theta,\phi)<0$ for any $(\theta,\phi)$ in the admissible region in Figure 1. As a consequence, $f\leq c<0$ on $p_{\Sigma}\mathcal{W}_{\Sigma}$ and so,    $$ \sigma(A) |_{\mathcal{W}_{\Sigma}} < 0.  $$

    Thus, the principal symbol $\sigma(A)$ is real-valued with $\sigma(A) |_{\mathcal{W}_\Sigma} < c< 0.$ By the $C^{\infty}$-Urysohn lemma, there exists  $\tilde\alpha_0\in C^{\infty}(T^*\Sigma,\R)$ with  $\tilde{\alpha}_0=\sigma(A)$ on $\mathcal{W}_{\Sigma}$, and  $\tilde{\alpha}_0\leq c/2<0$ on all of $T^*\Sigma$. 

    Choose a cut-off function $\chi_{\Sigma}\in C^{\infty}_c(T^*\Sigma)$ with $\chi_{\Sigma}(x,\xi,x^*,\xi^*)=1$ near $\mathcal{W}_{\Sigma}$  and  let $\chi_{\Sigma}(h) \in \Psi_{sc}^{0}(\Sigma)$ be the corresponding $h$-psdo.  From Proposition \ref{WF2} it then follows that 
            
    \begin{align}
        A(h)T_{\Sigma}u
        &=A(h)\chi_{\Sigma}(h)T_{\Sigma}u + O(h^{\infty})\nonumber\\
        &=Op_h(\tilde{\alpha}_0)\chi_{\Sigma}(h)T_{\Sigma}u
        + O(h) \| T_{\Sigma} u \|_{L^2}.
       \end{align}   
  

    The full symbol of $Op_h(\tilde{\alpha}_0)$ is real-valued, and we apply G{\aa}rding's inequality to get
    
    $$
    |\langle Op_h(\tilde{\alpha}_0)T_{\Sigma}u,T_{\Sigma}u\rangle_{L^2(\Sigma)}|\geq \tilde{C}_{\Sigma}\|T_{\Sigma}u\|_{L^2(\Sigma)}^2.
    $$

    Finally, we get
    
    \begin{align*}
        |\langle A(h)T_{\Sigma}u,T_{\Sigma}u\rangle_{L^2(\Sigma)}|
        &=|\langle Op_h(\tilde{\alpha}_0)T_{\Sigma}u,T_{\Sigma}u\rangle_{L^2(\Sigma)}|+O(h)\|T_{\Sigma}u\|^2_{L^2(\Sigma)}+O(h^{\infty})\\
        &\geq \tilde{C}_{\Sigma}(1-C'h)\|T_{\Sigma}u\|^2_{L^2(\Sigma)}.
    \end{align*}
In view of (\ref{garding1}), it follows that $|\langle Q_1(h)T_{\Sigma}u,T_{\Sigma}u\rangle|\geq C_{\Sigma}\|T_{\Sigma}u\|^2$ for $h$ sufficiently small.
\end{proof}

The condition (\ref{local condition}) for the $h$-uniform upper bound is local. We can post some stronger global geometric conditions to ensure that (\ref{local condition}) holds on a set that is large enough. For example, either of the following will do: 
\begin{itemize}
    \item[$(a)$] The intersection of $\Sigma$ and $S^*M$ is never orthogonal.
    \item[$(b)$] $J|_{\Sigma\cap S^*M}:=\pi_{\Sigma\cap S^*M}\circ J$ is not an isometry anywhere.
    \item[$(c)$] $\omega|_{\Sigma\cap S^*M}$ is not a symplectic form anywhere.
\end{itemize}
These conditions are not mutually exclusive.  Condition (a) is what we adopt in Theorem \ref{bounds}, i.e. the assumption (\ref{condition}). In fact, the same uniform upper bound is still true if (\ref{condition}) is replaced by (b) or (c).

\begin{proposition}\label{abc}
Under the assumptions of Theorem \ref{bounds} together with (\ref{condition}) holds, or one of (b), (c) holds instead. Then there exists $C_{\Sigma}>0$ such that $|\langle Q_1(h)T_{\Sigma}u,T_{\Sigma}u\rangle|\geq C_{\Sigma}\|T_{\Sigma}u\|^2$.

\end{proposition}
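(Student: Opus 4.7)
My plan is to show that each of (a), (b), (c) implies the pointwise angular condition (\ref{local condition}) on $p_\Sigma \mathcal{W}_\Sigma$, at which point the preceding proposition delivers the lower bound immediately. Since $p_\Sigma \mathcal{W}_\Sigma \subset \Sigma \cap S^*M$ is compact and the angles $\theta, \phi$ are continuous on $\Sigma$, it suffices to verify pointwise on $\Sigma \cap S^*M$ that $(\theta,\phi) \neq (\pi/2, 0)$ and $(\theta,\phi) \neq (\pi/2, \pi)$; a uniform $\delta > 0$ then follows by compactness. Case (a) is already built into condition (\ref{condition}): since $N_z\Sigma = \R\nu_z$ and $T_z S^*M = (\nabla\rho)_z^{\perp}$, the hypothesis $N_z\Sigma \cap T_zS^*M = \{0\}$ translates to $\langle \nu, \nabla\rho\rangle_{\tilde g}\neq 0$, i.e., $\cos\theta \neq 0$, which kills both bad points.

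For case (b), I would first identify ``$J|_{\Sigma\cap S^*M}$ is an isometry at $z$'' with ``$T_z(\Sigma \cap S^*M)$ is a complex (equivalently, $J$-invariant) subspace of $T_zM^{\mathbb{C}}$''. Since $J$ is a $\tilde g$-isometry on $TM^{\mathbb{C}}$, this is further equivalent to $J$-invariance of the $\tilde g$-orthogonal complement $\mathrm{span}(\nu, \nabla\rho)$. Using $\tilde g(J\nu,\nu)=0$ together with (\ref{theta,phi}), the unit vector $J\nu$ lies in $\mathrm{span}(\nu, \nabla\rho)$ if and only if it is parallel to $\nabla\rho$, iff $|\cos\phi| = 1$. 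Hence (b) forces $\phi \notin \{0,\pi\}$ everywhere on $\Sigma \cap S^*M$, again excluding the bad points.

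For case (c), I would identify the $\omega$-symplectic complement of $V = T_z(\Sigma \cap S^*M)$. Using $\tilde g(JY,Z)=\omega(Y,Z)$, one sees that $V^\omega = J \cdot V^{\perp_{\tilde g}} = \mathrm{span}(J\nu, J\nabla\rho)$. A short calculation with $\tilde g(J\nu, \nabla\rho) = |\nabla\rho|\cos\phi$ and $\tilde g(J\nabla\rho, \nu) = -|\nabla\rho|\cos\phi$ shows $aJ\nu + bJ\nabla\rho \in V$ iff $a\cos\phi = b\cos\phi = 0$. Therefore $\omega|_V$ is degenerate iff $\cos\phi = 0$, and so (c) forces $\phi = \pi/2$ everywhere on $\Sigma \cap S^*M$, which once more avoids $\phi \in \{0, \pi\}$.

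The main obstacle is the linear-algebraic translation in (b) and (c): carefully showing that the abstract conditions ``not an isometry'' and ``not symplectic'' on the codimension-2 real subspace $T(\Sigma \cap S^*M)$ each collapse to a precise constraint on the single angle $\phi$ governing the principal symbol in (\ref{A(h)}). Once this dictionary is in place, each of (a), (b), (c) funnels into the hypothesis of the preceding proposition, delivering $|\langle Q_1(h)T_\Sigma u, T_\Sigma u\rangle| \geq C_\Sigma \|T_\Sigma u\|^2$ for all sufficiently small $h$.
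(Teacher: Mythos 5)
Your overall strategy matches the paper's: show each of the three geometric conditions implies the pointwise angular condition (\ref{local condition}) on $p_\Sigma\mathcal{W}_\Sigma\subset\Sigma\cap S^*M$, then invoke the preceding proposition and compactness. Case (a) is identical to the paper's argument.

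In case (b), however, your intermediate ``iff'' is not correct as stated. You claim that $J\nu\in\mathrm{span}(\nu,\nabla\rho)$ if and only if $J\nu\parallel\nabla\rho$, and hence if and only if $|\cos\phi|=1$. But since $J\nu\perp\nu$ and $|J\nu|=1$, the condition $J\nu\in\mathrm{span}(\nu,\nabla\rho)$ only forces $J\nu$ to be $\pm$ the unit vector in that plane orthogonal to $\nu$ (the normalized tangential component of $\nabla\rho$), giving $\cos\phi=\pm\sin\theta$ — exactly the four boundary edges of the admissible diamond in Figure~\ref{Fig1}, as the paper derives. Being parallel to $\nabla\rho$ itself happens only at the special $\theta=\pi/2$ slice. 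Your argument survives because you only use the trivial direction $|\cos\phi|=1\Rightarrow J\nu\in\mathrm{span}(\nu,\nabla\rho)$ (since $J\nu\parallel\nabla\rho$ certainly lies in that span), and hence (b) does force $\phi\notin\{0,\pi\}$ — which is enough to avoid the two bad vertices. But the converse you assert would misidentify many boundary points of the diamond (e.g.\ $(\theta,\phi)=(\pi/4,\pi/4)$, where $J|_V$ \emph{is} an isometry) as non-isometric, so you should replace the claimed equivalence with the correct one $\cos\phi=\pm\sin\theta$, or else just present the one implication you actually use.

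In case (c) your argument is a genuinely different route from the paper's: rather than reducing (c) to (b) via a splitting $\omega=\omega|_{\Sigma\cap S^*M}\oplus(d\rho\wedge d\beta)$, you compute the $\omega$-orthogonal complement $V^\omega=J\,\mathrm{span}(\nu,\nabla\rho)$ directly and find $V\cap V^\omega\neq\{0\}$ iff $\cos\phi=0$. This is a clean, self-contained linear-algebra calculation and it pins down the degeneracy locus precisely as $\{\phi=\pi/2\}$, which excludes $\phi\in\{0,\pi\}$. This is arguably more transparent than the paper's chain of equivalences and is correct; well done there.
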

\begin{proof}

It suffices to check that each condition $(a)-(c)$ implies (\ref{local condition}) hold for every $z\in p_{\Sigma}\mathcal{W}_{\Sigma}$.
   \begin{itemize}
    \item[$(a)$] Assume that the intersection of $\Sigma$ and $S^*M$ is never orthogonal. It means $\langle \nu,\nabla\rho\rangle\neq 0$, so $\theta\neq \pi/2$ for all $z\in \Sigma\cap S^*M$. Because $\theta=\theta(z)$ is a continuous function on the closed set $\Sigma\cap S^*M$ and $\langle\nu,\nabla\rho\rangle\neq 0$ is an open condition, there exists a universal constant $\delta$ such that (\ref{local condition}) holds. In view of (\ref{singsupp}), we get (\ref{local condition}) for every $z\in p_{\Sigma}\mathcal{W}_{\Sigma}$.
    \item[$(b)$] Assume that $J|_{\Sigma\cap S^*M}:=\pi_{\Sigma\cap S^*M}\circ J$ is not an isometry anywhere. In fact, we can characterize the contrary
    \begin{align*}
        &J|_{\Sigma\cap S^*M} \text{ is an isometry of }T_z(S^*M\cap \Sigma)\\
        \iff& J\nu\perp (S^*M\cap\Sigma)\text{ at }z\\
        \iff& J\nu\in \text{span}_{\R}\{\nu,\nabla\rho\}
        \quad(\text{because }\text{codim}_{\R}T_z(S^*M\cap\Sigma)=2)\\
        \iff& 
        \begin{cases}
         \phi+\theta=\pi/2&\theta\in[0,\pi/2],\phi\in[0,\pi/2]\\
         \phi-\theta=\pi/2&\theta\in[0,\pi/2],\phi\in[\pi/2,\pi]\\
         \theta-\phi=\pi/2&\theta\in[\pi/2,\pi],\phi\in[0,\pi/2]\\
         \theta+\phi=3\pi/2&\theta\in[\pi/2,\pi],\phi\in[\pi/2,\pi]
        \end{cases}
    \end{align*} 
    which forms the 4 edges of the admissible region (gray) in Figure \ref{Fig1}. Therefore, condition $(b)$ avoids the intersection points $(\pi/2,0),(\pi/2,\pi)$. Using a similar open-close argument, we get (\ref{local condition}) for every $z\in p_{\Sigma}\mathcal{W}_{\Sigma}$.
    
    \item[$(c)$] Assume that $\omega|_{\Sigma\cap S^*M}$ is degenerate; that is, it is not a symplectic form anywhere. We characterize the contrary
    \begin{align*}
        &\omega|_{\Sigma\cap S^*M}\text{ is a symplectic form everywhere}\\
        \iff& \text{there is a decomposition } \omega=\omega|_{\Sigma\cap S^*M}\oplus(d\rho\wedge d\beta)\\
        \iff& \text{the endomorphism splits } J=J|_{\Sigma\cap S^*M}\oplus j\\
        \iff& J|_{\Sigma\cap S^*M} \text{ is an isometry of }T_z(S^*M\cap \Sigma)
    \end{align*} 
    which reduced to the case in (b).
\end{itemize}
\end{proof}

We now complete the proof of Theorem \ref{bounds}. 

\begin{proof}[Proof of Theorem \ref{bounds}]

We first prove the lower bound.
    
    By Proposition \ref{WF2}, $WF_h(Tu_h|_{\Sigma}) \subset {\mathcal W}_{\Sigma}$ where the latter set is compact and by $C^{\infty}$ Urysohn,  we can choose $\chi_{\Sigma} \in C_0^{\infty}(T^*\Sigma)$ with $\chi_{\Sigma}(x,\xi,x^*,\xi^*)=1$  near ${\mathcal W}_{\Sigma}$ and let $\tilde{\chi}_{\Sigma} \in C^{\infty}_0(T^*\Sigma)$ with $\tilde{\chi}_{\Sigma} \Supset \chi_{\Sigma}.$ We denote the corresponding quantizations by $\chi(h) \in \Psi_h^{0}(\Sigma)$ and $\tilde{\chi}(h) \in \Psi_h^0(\Sigma).$

    Then setting the test symbol $a =1$ in Theorem \ref{thm1}, it follows that with ${\mathcal P}_{\Sigma, a}(h) \in \Psi_h^0(\Sigma)$ in (\ref{test op}),
    
 \begin{eqnarray} \label{lower1}
\big| \int_{S^*M \cap \Sigma}q~ d\mu_{\Sigma} \big| \leq  \big|  \langle {\mathcal P}_{\Sigma,1}(h) T_{\Sigma} u, T_{\Sigma} u \rangle_{L^2(\Sigma)}  \big| + o(1) \nonumber \\
\leq c'_{\Sigma}   \| T_{\Sigma} u  \|^2_{L^2(\Sigma)} + o(1) 
\end{eqnarray}
by Cauchy-Schwarz and $L^2$-boundedness of ${\mathcal P}_{\Sigma,1}(h).$ 

On the LHS in (\ref{lower1}), $ \big| \int_{S^*M \cap \Sigma} q~d\mu_{\Sigma} \big|  \geq c''_{\Sigma} >0$ since $\Sigma$ is assumed to be in general position and so, the lower bound
$$ \| T_{\Sigma} u  \|_{L^2(\Sigma)}  \geq c_{\Sigma} >0$$
follows from (\ref{lower1})  for $h\in (0,h_0]$ sufficiently small since the $o(1)$-error can then be absorbed in the LHS. 

The crude, universal $O(h^{-1/2})$-upper bound has already been established in Lemma \ref{lemma of crude}. To  prove the uniform upper bound under any of  the geometric assumptions on $\Sigma$ in Proposition \ref{abc},   set the test symbol $a =1.$ Then,  it follows from Proposition \ref{abc} and Theorem \ref{thm1} that  with $h \in (0,h_0]$ sufficiently small, 
\begin{align*}
C_{\Sigma}\|T_{\Sigma}u\|^2_{L^2(\Sigma)}&\leq |\langle Q_1(h)T_{\Sigma} u, T_{\Sigma} u \rangle_{L^2(\Sigma)} | \\
&= |\langle \mathcal{P}_{\Sigma,1}(h)T_{\Sigma}u,T_{\Sigma}u\rangle_{L^2(\Sigma)} | + O(h^{\infty}) \\
&= \left|\int_{\Sigma\cap S^*M} q~d\mu_{\Sigma}\right| + o(1). \
\end{align*}

Since $q\in S^0$ and $S^*M\cap \Sigma$ is compact, the integral $|\int_{\Sigma \cap S^*M} q d\mu_{\Sigma} | < \infty$ and depends only on the geometry of $\Sigma.$ As a result, for $h \in (0,h_0],$
$$
\|T_{\Sigma}u\|_{L^2(\Sigma)}\leq C_{\Sigma}<\infty. 
$$
    
\end{proof}

\appendix \label{q}
\section{The explicit expression for $q$}
The symbol $q$ in the complex QER theorem is provided in formulas (5.34) and (6.10) in \cite{CT24}. For completeness, we also list it here. Recall the QER of Cauchy data in the complex setting (\ref{qercd}):
\begin{equation}
    \begin{split}
        \langle a(h^2\Delta_{\Sigma}+2h\nabla\rho+h\Delta\rho)e^{-\rho/h}u^{\mathbb{C}}_h,e^{-\rho/h}u^{\mathbb{C}}_h\rangle&_{L^2(\Sigma)}\\
        +\langle ah\partial_\nu (e^{-\rho/h}u^{\mathbb{C}}_h),h\partial_{\nu}(e^{-\rho/h}u^{\mathbb{C}}_h)\rangle&_{L^2(\Sigma)}\\
        \sim_{h\to 0^+}e^{1/h}&\int_{\Sigma\cap S^*M}a  \,q \,d\mu_{\Sigma}.
    \end{split}
\end{equation}
where $d\mu_{\Sigma}$ is the measure on $\Sigma\cap S^*M$ induced from the Liouville measure on $S^*M$, the density $q=q_1+q_2$ is
\begin{equation}\label{q1}
    \begin{split}
    q_1(x,\xi)=&8|b_0(x,-2\xi,x)|^2(\partial_{\beta}\varphi)(x,-2\xi,x)(\xi\cdot\partial_x\beta(x,-2\xi))
    \end{split}
\end{equation}

and
\begin{equation}\label{q2}
q_2(x,\xi)=|b_0(x,\xi,x)|^2\big((\xi\cdot\partial_{\beta}x)^2-\rho_{\beta}(\xi\cdot\partial_{\beta}x)\big).
\end{equation}

In these expressions, $b_0$ is the leading term in the amplitude of the FBI transform $T=T_{hol}$,  $\varphi$ is the phase of $T$ and $\beta$ is the normalized defining function for $\Sigma.$

We now prove the following 

\begin{lemma} \label{general}
Let $H \subset M$ be a fixed hypersurface of the base manifold, $M,$ and  $\Sigma \subset B^*M$ be a hypersurface in the Grauert tube with the property that
$$ \tilde{g}( \Sigma \cap S^*M, S_H^*M) < \epsilon_0,$$
where $\tilde{g}$ is the Kahler-Riemannian metric on $B^*M.$
Then, provided $\epsilon_0 >0$ is sufficiently small, $\Sigma$ is a hypersurface in general position; that is,
$$ \int_{\Sigma \cap S^*M} q \, d \mu_{\Sigma} \neq 0.$$
\end{lemma}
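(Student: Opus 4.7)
The plan is to verify the lemma in two stages: first compute the integral explicitly for the reference vertical case $\Sigma_0 := B_H^*M$ and show it is strictly positive, then transfer the conclusion to nearby hypersurfaces by continuity of $q$ and the induced measure in the defining function.

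For the reference case, I would work in Fermi coordinates $(x',x_n)$ adapted to $H = \{x_n = 0\}$. Since the K\"ahler metric $\tilde g$ decouples the base and fiber at points of $M$ and $|\nabla_{\tilde g} x_n|_{\tilde g} = 1$, the function $\beta_0(x,\xi) := x_n$ is a normalized defining function for $\Sigma_0$, the intersection $\Sigma_0 \cap S^*M = S_H^*M$ carries the standard Liouville measure, and $\partial_{\beta_0} = \partial_{x_n}$. Substituting $\beta_0 = x_n$ into (\ref{q1})--(\ref{q2}), the key quantities simplify to $\xi\cdot\partial_x\beta_0 = \xi_n$ and $\xi\cdot\partial_{\beta_0}x = \xi_n$, while the leading heat-kernel amplitude satisfies $b_0(x,\xi,x) = c_n |g(x)|^{1/4} > 0$ by the classical Minakshisundaram--Pleijel expansion. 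The dominant piece of $q_2$ is then $|b_0(x,\xi,x)|^2 \xi_n^2$, whose integral over $S_H^*M$ is strictly positive.

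The remaining terms are controlled by a parity argument: the involution $\iota : (x,\xi',\xi_n) \mapsto (x,\xi',-\xi_n)$ preserves $S_H^*M$ and the measure $d\mu_L$; the $\rho_\beta\cdot\xi_n$ piece of $q_2$ is odd under $\iota$ (since $\rho_\beta = \tfrac12 \partial_{x_n}(g^{ij}\xi_i\xi_j)$ is even in $\xi$), and a similar check using $\varphi = i(r_\C^2/2 + \rho)$ at the mixed point $(x,-2\xi,x)$ shows that the product $(\partial_{\beta_0}\varphi)(\xi\cdot\partial_x\beta_0) = (\partial_{x_n}\varphi)\cdot\xi_n$ appearing in $q_1$ is also odd under $\iota$ (the derivatives of $r_\C^2$ in the normal variable contribute an even factor in $\xi_n$, leaving the overall $\xi_n$ factor odd). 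Consequently all terms except the positive $|b_0|^2 \xi_n^2$ contribution vanish upon integration, and $\int_{S_H^*M} q\,d\mu_{\Sigma_0} = \int_{S_H^*M} |b_0(x,\xi,x)|^2 \xi_n^2\,d\mu_L > 0$.

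For the continuity step, a hypersurface $\Sigma$ with $\tilde g(\Sigma\cap S^*M, S_H^*M) < \epsilon_0$ can be realized, in the fixed tubular neighborhood of $S_H^*M$, as a graph over $\Sigma_0$ whose normalized defining function $\beta_\Sigma$ tends to $\beta_0$ in $C^2$ as $\epsilon_0 \to 0^+$. The densities (\ref{q1})--(\ref{q2}) depend on $b_0$ and $\varphi$ (independent of $\Sigma$) and only on finitely many derivatives of $\beta_\Sigma$, and both the induced surface measure $d\mu_\Sigma$ and the compact intersection $\Sigma\cap S^*M$ vary continuously with $\Sigma$. Thus the map $\Sigma \mapsto \int_{\Sigma\cap S^*M} q\,d\mu_\Sigma$ is continuous at $\Sigma_0$, and shrinking $\epsilon_0$ keeps the integral strictly positive. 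The main obstacle I anticipate is the parity analysis of $q_1$: the evaluation of $\partial_{\beta}\varphi$ at the non-real point $(x,-2\xi,x)$ requires a careful use of the holomorphic extension of $r_\C^2$ and the identity $2\rho(z) = -r_\C^2(z,\bar z)/4$; if the parity cancellation were to fail, the strategy would be to combine the $q_1$ term with the strictly positive $|b_0|^2 \xi_n^2$ part of $q_2$ and show that the sum still has a fixed sign when $\Sigma$ is close enough to $\Sigma_0$.
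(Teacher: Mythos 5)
The reduction to the reference case $\Sigma_0 = B_H^*M$ and the continuity/perturbation step at the end mirror what the paper does, and your computation of the leading piece of $q_2$ agrees with the paper (up to the normalization $|b_0|=1$ that the paper adopts rather than $c_n|g|^{1/4}$). However, the parity analysis you use to dispose of $q_1$ is incorrect, and the sign of the final answer is wrong.

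You claim that $\partial_{\beta_0}\varphi(x,-2\xi,x)$ is even in $\xi_n$, so that the product $(\partial_{\beta_0}\varphi)(\xi\cdot\partial_x\beta_0)$ appearing in $q_1$ is odd under $\xi_n\mapsto -\xi_n$ and integrates to zero over $S_H^*M$. This is not so. For an FBI phase with the standard normalizations $\varphi(x,\xi,x)=0$, $\partial_y\varphi(x,\xi,x)=-\xi$, one has $\partial_x\varphi(x,\xi,x)=\xi$, hence $\partial_{\beta_0}\varphi(x,-2\xi,x)=-2\xi_n+O(\delta)$. This factor is \emph{odd} in $\xi_n$, exactly like $\xi\cdot\partial_x\beta_0 = \xi_n+O(\delta)$, so the product $q_1 = 8(-2\xi_n+O(\delta))(\xi_n+O(\delta)) = -16\xi_n^2+O(\delta)$ is \emph{even} and does not vanish upon integration. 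In fact it dominates $q_2 = \xi_n^2+O(\delta)$: the total is $q = -15\xi_n^2+O(\delta)$, and the integral $\int_{S_H^*M}q\,d\mu$ is strictly \emph{negative}, not positive as you conclude. (Your secondary parity argument for the $\rho_\beta$ term in $q_2$ is also superfluous here, since $\rho_{\beta_0}=O(\delta)$ by $g^{ij}(x)=\delta^{ij}+O(|x|^2)$.) The lemma only requires the integral to be nonzero, so the conclusion survives, but the mechanism you propose — parity cancellation of $q_1$ — does not hold; what is needed is exactly the explicit evaluation of both $q_1$ and $q_2$ to the quadratic order in $\xi_n$ that you flagged as a fallback, which is what the paper carries out.
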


\begin{proof} Consider the special case where $\Sigma = B_H^*M$ so that $\Sigma \cap S^*M = S_H^*M.$ Fix $\delta >0$ small, and pick points $p_j \in \Sigma, j=1,...,N$ with $\Sigma = B_H^*M = \cup_{j=1}^N B_{\delta}(p_j),$ where $B_{\delta}(p_j)$ is a ball center $p_j$ and radius $\delta >0.$ Let $\chi_j \in C^{\infty}(\Sigma); j=1,...,N$ be a partition of unity subordinate to this covering. 
We work locally in a component ball $B_{\delta}(p).$ Given the canonical projection $\pi: B^*M \to M$ we let $x: \pi(B_\delta(p)) \to \R^n$ be geodesic normal coordinates centered at $\pi(p) \in H$ so that $x(\pi(p)) = 0$ and by possibly making a linear change of $x$-coordinates fixing $\pi(p) \in H,$ we can assume that 
$\nu_H(\pi(p)) =  \partial_{x_n}.$ Let $\xi \in T^*_x$ will be the corresponding fiber coordnates.  The defining function in this case is then of the form
$$\beta_0(x,\xi) = \beta_0(x) = x_n + O(|x|^2); \quad  (x,\xi) \in B_{\delta},$$

and so the normal vector field to $B_H^*M = \{ \beta (x) = 0 \}$ is of the form
\begin{equation} \label{vfield}
\partial_{\beta_0} = \partial_{x_n} + A(x) \cdot \partial_{x}; \quad |A(x)| = O(x).
\end{equation}

Since $|b_0(x,-2\xi,x)|  = 1,$ and
$$ \partial_{\beta_0}\varphi(x,-2\xi,x) = -2 \xi_n - 2 A(x) \cdot \xi = -2\xi_n + O(\delta),$$
 it follows from (\ref{q1}) and (\ref{vfield}) that for all $(x,\xi) \in B_\delta(p) \cap S^*M,$
\begin{eqnarray} \label{q1term}
q_1(x,\xi) &=& 8 (\partial_{\beta_0}\varphi)(x,-2\xi,x)(\xi\cdot\partial_x\beta_0(x,-2\xi)) \nonumber \\
&=& 8 \big(-2 \xi_n + O(\delta) \big) \,\, \xi \cdot \partial_x \beta_0(x) \nonumber\\
&=&  8 \big(-2 \xi_n + O(\delta) \big) \cdot \big(\xi_n + O(\delta) \big) \nonumber \\
&=& - 16 \xi_n^2 + O(\delta), \quad 
\end{eqnarray}

For the $q_2$-term we again use that $|b_0(x,\xi,x)| = 1$ and note that
$$ \partial_{\beta_0} x  = (C_1(x),...,C_{n-1}(x), 1); \quad C_j(x) = O(x);\quad j=1,..,n-1,$$
and
$$ \rho_{\beta_0} = \frac{1}{2} \big( \partial_{x_n} + A(x) \cdot \partial_x \big) ( 1 + O(|x|^2)) |\xi|^2 = O(x) |\xi|^2 = O(\delta),$$
so that

\begin{eqnarray} \label{q2term}
q_2(x,\xi) &=& (\xi\cdot\partial_{\beta_0}x)^2-\rho_{\beta}(\xi\cdot\partial_{\beta}x) \nonumber \\
&=&  (\xi\cdot\partial_{\beta_0}x)^2 + O(\delta) \nonumber \\
&=& \xi_n^2 + O(\delta).
\end{eqnarray}

It follows from (\ref{q1term}) and (\ref{q2term}) that
$$\int_{S_H^*M} q \, \chi_j d\mu = - 15 \int_{S_H^*M} \xi_n^2 \chi_j d\mu <0; j=1,..,N$$

and so, summing over $j=1,...,N,$ it follows that
\begin{equation} \label{qterm}
\int_{S_H^*M} q \, \chi_j d\mu  \leq -C_0 <0
\end{equation}
for some $C_0>0.$

To complete the proof we note that choosing $\Sigma$ with defining function 
$$ \beta(x,\xi) = \beta_0(x) +\epsilon_0 \, G(x,\xi); \quad G \in C^{\infty}(B^*M),$$
it is clear that
$$ \int_{\Sigma \cap S^*M} q \, d\mu_{\Sigma} = \int_{S_H^*M} q d\mu + O(\epsilon_0).$$
The lemma then follows from (\ref{qterm}) provided $\epsilon_0>0$ is chosen sufficiently small.
\end{proof}

\bibliographystyle{ieeetr}
\bibliography{ref}

\end{document}